\begin{document}

\newcommand{\bwr}{\boldsymbol{\wr}}


\newenvironment{nmath}{\begin{center}\begin{math}}{\end{math}\end{center}}

\newtheorem{thm}{Theorem}[section]
\newtheorem{lem}[thm]{Lemma}
\newtheorem{remark}[thm]{Remark}
\newtheorem{prop}[thm]{Proposition}
\newtheorem{cor}[thm]{Corollary}
\newtheorem{conj}[thm]{Conjecture}
\newtheorem{dfn}[thm]{Definition}
\newtheorem{prob}[thm]{Problem}
\newtheorem{ques}[thm]{Question}


\newcommand{\A}{\mathcal{A}}
\newcommand{\B}{\mathcal{B}}
\newcommand{\K}{\mathcal{K}}
\newcommand{\F}{\mathbb{F}}
\newcommand{\1}{\mathbf{1}}
\newcommand{\s}{\sigma}
\renewcommand{\P}{\mathcal{P}}
\renewcommand{\O}{\Omega}
\renewcommand{\S}{\Sigma}
\renewcommand{\Pr}{\mathbb{P}}
\renewcommand{\approx}{\asymp}
\newcommand{\T}{\mathrm{T}}
\newcommand{\co}{\mathrm{co}}
\newcommand{\Isom}{\mathrm{Isom}}
\newcommand{\edge}{\mathrm{edge}}
\newcommand{\bounded}{\mathrm{bounded}}
\newcommand{\e}{\varepsilon}
\newcommand{\im}{\mathrm{i}}
\newcommand{\restrict}{\upharpoonright}
\newcommand{\supp}{{\mathrm{\bf supp}}}
\renewcommand{\l}{\lambda}
\newcommand{\U}{\mathcal{U}}
\newcommand{\calH}{\mathcal{H}}
\newcommand{\G}{\Gamma}
\newcommand{\g}{\gamma}
\renewcommand{\L}{\Lambda}
\newcommand{\hcf}{\mathrm{hcf}}
\renewcommand{\a}{\alpha}
\newcommand{\N}{\mathbb{N}}
\newcommand{\R}{\mathbb{R}}
\newcommand{\Z}{\mathbb{Z}}
\newcommand{\C}{\mathbb{C}}

\newcommand{\E}{\mathbb{E}}
\newcommand{\alp}{\alpha^*}

\newcommand{\bb}[1]{\mathbb{#1}}
\renewcommand{\rm}[1]{\mathrm{#1}}
\renewcommand{\cal}[1]{\mathcal{#1}}

\newcommand{\fin}{\nolinebreak\hspace{\stretch{1}}$\lhd$}

\title{Embeddings of discrete groups and the speed of random walks}
\author{Assaf Naor\footnote{Research supported in part by NSF grants
CCF-0635078 and DMS-0528387.}\\Courant Institute\\{\tt
naor@cims.nyu.edu} \and Yuval Peres \footnote{Research supported in
part by NSF grant DMS-0605166.}\\Microsoft Research and UC
Berkeley\\{\tt peres@microsoft.com}}
\date{}

\maketitle

\begin{abstract}
Let $G$ be a group generated by a finite set $S$ and equipped with
the associated left-invariant word metric $d_G$. For a Banach space
$X$ let $\alpha^*_X(G)$ (respectively $\alpha^\#_X(G)$) be the
supremum over all $\alpha\ge 0$ such that there exists a Lipschitz
mapping (respectively an equivariant mapping) $f:G\to X$ and  $c>0$
such that for all $x,y\in G$ we have $\|f(x)-f(y)\|\ge c\cdot
d_G(x,y)^\alpha$. In particular, the  {\em Hilbert compression
exponent} (respectively the {\em equivariant Hilbert compression
exponent}) of $G$ is $\alp(G)\coloneqq\alpha^*_{L_2}(G)$
(respectively $\alpha^\#(G)\coloneqq \alpha_{L_2}^\#(G)$). We show
that if $X$ has modulus of smoothness of power type $p$, then $
\alpha^\#_X(G)\le \frac{1}{p\beta^*(G)}$. Here  $\beta^*(G)$ is the
largest $\beta\ge 0$ for which there exists a set of generators $S$
of $G$ and $c>0$ such that for all $t\in \N$ we have
$\E\big[d_G(W_t,e)\big]\ge ct^\beta$, where $\{W_t\}_{t=0}^\infty$
is the canonical simple random walk on the Cayley graph of $G$
determined by $S$, starting at the identity element. This result is
sharp when $X=L_p$, generalizes a theorem of Guentner and
Kaminker~\cite{GK04}, and answers a question posed by
Tessera~\cite{Tess06}. We also show that if $\alpha^*(G)\ge \frac12$
then $ \alpha^*(G\bwr \Z)\ge \frac{2\alpha^*(G)}{2\alpha^*(G)+1}$.
This improves the previous bound due to Stalder and
Valette~\cite{SV07}. We deduce that if we write $\Z_{(1)}\coloneqq
\Z$ and $\Z_{(k+1)}\coloneqq \Z_{(k)}\bwr \Z$ then
$\alpha^*(\Z_{(k)})=\frac{1}{2-2^{1-k}},$ and use this result to
answer a question posed by Tessera in~\cite{Tess06} on the relation
between the Hilbert compression exponent and the isoperimetric
profile of the balls in $G$. We also show that the cyclic
lamplighter groups $C_2\bwr C_n$ embed into $L_1$ with uniformly
bounded distortion, answering a question posed by Lee, Naor and
Peres in~\cite{LeeNaoPer}. Finally, we use these results to show
that edge Markov type need not imply Enflo type.
\end{abstract}

\section{Introduction}

Let $G$ be a finitely generated group\footnote{In this paper all
groups are assumed to be infinite unless stated otherwise.}. Fix a
finite set of generators $S\subseteq G$, which we will always assume
to be symmetric (i.e. $s\in S \iff s^{-1}\in S$). Let $d_G$ be the
left-invariant word metric induced by $S$ on $G$. Given a Banach
space $X$ let $\alpha_X^*(G)$ denote the supremum over all
$\alpha\ge0$ such that there exists a Lipschitz mapping $f:G\to X$
and  $c>0$ such that for all $x,y\in G$ we have $\|f(x)-f(y)\|\ge
c\cdot d_G(x,y)^\alpha$. For $p\ge 1$ we write
$\alpha_{L_p}^*(G)=\alpha_p^*(G)$ and when $p=2$ we write
$\alpha_2^*(G)=\alpha^*(G)$. The parameter $\alpha^*(G)$ is called
the {\em Hilbert compression exponent} of $G$. This quasi-isometric
group invariant was introduced by Guentner and Kaminker
in~\cite{GK04}. We refer to the
papers~\cite{GK04,CN05,AGS06,deCTesVal,Tess06,ADS06,SV07,CSV07} and
the references therein for background on this topic and several
interesting applications.

Analogously to the above definition, one can consider the {\em
equivariant compression exponent} $\alpha^\#_X(G)$, which is defined
exactly as $\alpha^*_X(G)$ with the additional requirement that the
embedding $f:G\to X$ is equivariant (see Section~\ref{sec:equi} for
the definition). As above, we introduce the notation
$\alpha^\#_p(G)=\alpha^\#_{L_p}(G)$ and
$\alpha^\#(G)=\alpha_2^\#(G)$. Clearly $\alpha^\#_X(G)\le
\alpha^*_X(G)$. In the Hilbertian case, when $G$ is amenable we have
$\alpha^*(G)=\alpha^\#(G)$. This was proved by by Aharoni, Maurey
and Mityagin~\cite{AhaMauMit} (see also Chapter 8 in~\cite{BenLin})
when $G$ is Abelian, and by Gromov for general amenable groups
(see~\cite{deCTesVal}).

 The
modulus of uniform smoothness of a Banach space $X$ is defined for
$\tau>0$ as
\begin{eqnarray}\label{def:smoothness}
\rho_X(\tau)=\sup\left\{\frac{\|x+\tau y\|+\|x-\tau y\|}{2}-1:\
x,y\in X,\  \|x\|=\|y\|=1\right\}.
\end{eqnarray}
$X$ is said to be uniformly smooth if $\lim_{\tau\to 0}
\frac{\rho_X(\tau)}{\tau} =0$. Furthermore, $X$ is said to have
modulus of smoothness of power type $p$ if there exists a constant
$K$ such that $\rho_X(\tau)\le K\tau^p$ for all $\tau>0$. It is
straightforward to check that in this case necessarily $p\le 2$. A
deep theorem of Pisier~\cite{Pisier75} states that if $X$ is
uniformly smooth then there exists some $1< p\le 2$ such that $X$
admits an equivalent norm which has modulus of smoothness of power
type $p$. For concreteness we note that $L_p$ has modulus of
smoothness of power type $\min\{p,2\}$. See Section~\ref{sec:equi}
for more information on this topic.

Define $\beta^*(G)$ to be the supremum over all $\beta\ge 0$ for
which there exists a symmetric set of generators $S$ of $G$ and
$c>0$ such that for all $t\in \N$,
\begin{eqnarray}\label{eq:assumption} \E\big[d_G(W_t,e)\big]\ge ct^\beta,
\end{eqnarray}
where here, and in what follows, $\{W_t\}_{t=0}^\infty$ is the
canonical simple random walk on the Cayley graph of $G$ determined
by $S$, starting at the identity element $e$. In~\cite{ANP07}
Austin, Naor and Peres used the method of {\em Markov type} to show
that if $G$ is amenable and $X$ has modulus of smoothness of power
type $p$ then
\begin{eqnarray}\label{eq:ANP}
\alpha^*_X(G)\le \frac{1}{p\beta^*(G)}.
\end{eqnarray}
Our first result, which is proved in Section~\ref{sec:equi},
establishes the same bound as~\eqref{eq:ANP} for the equivariant
compression exponent $\alpha^\#_X(G)$, even when $G$ is not
necessarily amenable.

\begin{thm}\label{thm:equialpha} Let $X$ be a Banach space which
has modulus of smoothness of power type $p$. Then
\begin{eqnarray}\label{eq:sharp}
\alpha^\#_X(G)\le \frac{1}{p\beta^*(G)}.
\end{eqnarray}
\end{thm}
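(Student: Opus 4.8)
The plan is to recast the equivariant hypothesis in terms of the associated affine isometric action, prove a \emph{diffusive} bound $\E\big[\|\psi(W_{2t})\|^p\big]=O(t)$ for the corresponding cocycle $\psi$, and combine it with the definition of $\beta^*(G)$ via Jensen's inequality. First, recall that an equivariant $f\colon G\to X$ is the same as an affine isometric action $g\cdot x=\pi(g)x+\psi(g)$, with $\pi$ an isometric representation and $\psi(g)=f(g)-f(e)$ the associated cocycle ($\psi(gh)=\psi(g)+\pi(g)\psi(h)$, $\psi(e)=0$); since $\|f(x)-f(y)\|=\|\psi(y^{-1}x)\|$, the compression condition for $f$ becomes $c\,d_G(g,e)^{\alpha}\le\|\psi(g)\|\le L\,d_G(g,e)$ for all $g$, and after rescaling we may take $L=1$. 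Fix a symmetric generating set $S$ whose walk-speed exponent is some $\beta$ close to $\beta^*(G)$; this leaves $\alpha^\#_X(G)$ unchanged, being a quasi-isometry invariant. It then suffices to show $\E\big[\|\psi(W_{2t})\|^p\big]\le C(p,K)\,t$ for all $t$. Indeed, granting this, $c^p\,\E\big[d_G(W_{2t},e)^{p\alpha}\big]\le C(p,K)\,t$; when $p\alpha\ge1$ the map $x\mapsto x^{p\alpha}$ is convex, so Jensen's inequality and the definition of $\beta$ give $\big(c'(2t)^{\beta}\big)^{p\alpha}\le\big(\E[d_G(W_{2t},e)]\big)^{p\alpha}\le\E\big[d_G(W_{2t},e)^{p\alpha}\big]\le C(p,K)t/c^p$, which for all $t$ forces $p\alpha\beta\le1$; letting $\alpha\uparrow\alpha^\#_X(G)$ and $\beta\uparrow\beta^*(G)$ yields~\eqref{eq:sharp}, while if $p\alpha<1$ then $\alpha<\tfrac1p\le\tfrac1{p\beta^*(G)}$ since $\beta^*(G)\le1$.

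For the diffusive bound when $X$ is a Hilbert space (so $p=2$), observe that $N(g):=\|\psi(g)\|^2$ is symmetric and conditionally negative definite, so for every $s>0$ one may write $e^{-sN(g)}=\langle\pi_s(g)\xi_s,\xi_s\rangle$ with $\pi_s$ unitary and $\|\xi_s\|=1$. Then $\E\big[e^{-sN(W_{2t})}\big]=\langle A_s^{2t}\xi_s,\xi_s\rangle$ where $A_s=\tfrac1{|S|}\sum_{r\in S}\pi_s(r)$ is a self-adjoint contraction (self-adjointness because $S=S^{-1}$), and the spectral theorem together with the elementary inequality $1-\lambda^{2t}\le t(1-\lambda^2)$ on $[-1,1]$ gives
\[
1-\E\big[e^{-sN(W_{2t})}\big]=\int_{-1}^{1}(1-\lambda^{2t})\,d\mu_s(\lambda)\le t\int_{-1}^{1}(1-\lambda^{2})\,d\mu_s(\lambda)=t\Big(1-\E\big[e^{-sN(W_2)}\big]\Big).
\]
Dividing by $s$ and letting $s\downarrow0$ yields $\E[N(W_{2t})]\le t\,\E[N(W_2)]$; since $\sqrt N$ is a left-invariant pseudometric, $N(W_2)\le\big(\sqrt{N(g_1)}+\sqrt{N(g_2)}\big)^2\le4\max_{r\in S}N(r)\le4$, so $\E\big[\|\psi(W_{2t})\|^2\big]\le4t$. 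This is essentially the argument of Guentner--Kaminker~\cite{GK04} and already proves the theorem for $X=L_2$.

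For general $X$ of power type $p$ the positive-definite machinery is gone, and I would argue by martingales, imitating the Markov-type method of~\cite{ANP07}. The process $k\mapsto\psi(W_k)$ has increments $\psi(W_{k+1})-\psi(W_k)=\pi(W_k)\psi(g_{k+1})$ of norm $\le1$; it is not a martingale, its conditional drift being $\pi(W_k)\bar\psi$ with $\bar\psi=\tfrac1{|S|}\sum_{r\in S}\psi(r)$. Passing to the Doob martingale $N_k:=\E\big[\psi(W_{2t})\mid g_1,\dots,g_k\big]$ and using the Markov property one gets $N_k=\psi(W_k)+\pi(W_k)\,\E[\psi(W_{2t-k})]$, so each martingale difference factors through the isometry $\pi(W_{k-1})$ and has the form $\psi(g_k)-\bar\psi+\big[\pi(g_k)-A\big]v_{2t-k}$ with $A=\tfrac1{|S|}\sum_{r\in S}\pi(r)$ and $v_m:=\E[\psi(W_m)]$; reversibility ($S=S^{-1}$ makes the step operator, and $A$, self-adjoint) together with the isometric action — which makes these increment norms position-independent, playing the role the stationary measure plays in~\cite{ANP07} — and the cocycle identity let one bound $\E\|N_k-N_{k-1}\|^p$ by a constant, whence Pisier's theorem (power type $p$ smoothness implies martingale type $p$, cf.~\cite{Pisier75}) gives $\E\|N_{2t}-N_0\|^p\le C(p,K)t$. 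The remaining term $N_0=v_{2t}=\E[\psi(W_{2t})]$ is the delicate point: symmetry of $S$ forces $\bar\psi$, hence every $v_m$, to be annihilated by all $\pi$-invariant functionals (the invariant part of a cocycle is a homomorphism, which averages to zero over the symmetric set $S$), and passing to the even time $2t$ kills the periodic part, so a quantitative mean-ergodic estimate for $A$ yields $\|v_{2t}\|=O(\sqrt t)$, and since $p\le2$ this gives $\|v_{2t}\|^p=O(t^{p/2})=O(t)$. Combining the displays proves $\E\|\psi(W_{2t})\|^p=O(t)$. I expect this last circle of estimates — running the Markov-type argument of~\cite{ANP07} without a finite stationary measure, with $\pi$ doing the stationarizing work, and in particular the drift bound $\|v_{2t}\|=O(\sqrt t)$ — to be the main obstacle; the rest is the martingale type $p$ inequality and the Jensen step above.
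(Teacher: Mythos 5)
Your overall framework --- reduce to cocycles, prove a diffusive bound $\E\|\psi(W_t)\|^p\lesssim t$, and extract $p\alpha\beta\le 1$ via Jensen --- matches the paper's exactly, including the reduction to the case $p\alpha\ge 1$. Your Hilbert-space warm-up (Schoenberg, positivity of $e^{-s\|\psi(\cdot)\|^2}$, spectral theorem for the self-adjoint contraction $A_s$) is correct, though the paper's Remark~\ref{rem:hilbert case} gives a slicker version: $W_t^{-1}$ and $W_t^{-1}W_{2t}$ are i.i.d.\ copies of $W_t$, so setting $Y_1=f(W_t^{-1})$ and $Y_2=f(W_t^{-1}W_{2t})$ and expanding $\E\|Y_1-Y_2\|^2$ yields $\E\|f(W_{2t})\|^2\le 2\E\|f(W_t)\|^2$ in one line.

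Where the proposal genuinely diverges from the paper, and where it has a real gap, is the martingale step for general uniformly smooth $X$. You use the Doob martingale $N_k=\E[\psi(W_{2t})\mid g_1,\dots,g_k]=\psi(W_k)+\pi(W_k)v_{2t-k}$, whose increments are $\pi(W_{k-1})\big[\psi(g_k)-\bar\psi+(\pi(g_k)-A)v_{2t-k}\big]$. To feed this into Pisier's inequality you need $\E\|N_k-N_{k-1}\|^p=O(1)$ uniformly in $k,t$, and you also need $\|N_0\|=\|v_{2t}\|$ controlled. You assert both, but neither is established, and they are in tension with each other: the increment contains $(\pi(g_k)-A)v_{2t-k}$, whose only a priori bound is $2\|v_{2t-k}\|$, which under your own claimed estimate is $O(\sqrt{2t-k})$, not $O(1)$; and summing $(2t-k)^{p/2}$ over $k\le 2t$ gives $O(t^{1+p/2})$, not $O(t)$. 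There is also no quantitative mean ergodic theorem available: $A=\tfrac1{|S|}\sum_{s\in S}\pi(s)$ is merely a contraction on a non-Hilbert Banach space, with no adjoint and no spectral theorem, and the orthogonality heuristic for why $\bar\psi$ should avoid the fixed space of $A$ is not substantiated. These are exactly the non-stationarity/drift obstructions the paper explicitly flags.

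The paper's fix is to avoid the Doob martingale altogether. It proves the cocycle identity
\[
2f(W_t)=\sum_{j=1}^t\pi(W_{j-1})f(\sigma_j)-\sum_{j=1}^t\pi(W_j)f(\sigma_j^{-1}),
\]
and centers by the \emph{fixed} vector $v:=\E[f(W_1)]$ rather than the time-dependent $v_m$: with $M_t:=\sum_{j\le t}\pi(W_{j-1})(f(\sigma_j)-v)$ and $N_t:=\sum_{j\le t}\pi(W_t^{-1}W_j)(f(\sigma_j^{-1})-v)$, one has $2f(W_t)=M_t-\pi(W_t)N_t-v+\pi(W_t)v$, where $M_t$ is a martingale with increments of norm at most $2$, $N_t$ is equidistributed with $M_t$ by symmetry of $S$, and $\pi(W_t)$ is an isometry. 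Pisier's inequality applied to $M_t$ then gives $\E\|f(W_t)\|^p\lesssim t\,\E\|f(W_1)\|^p$ with no drift term to control. Your strategy and Jensen endgame are right, but the martingale construction as written does not close; you would need to genuinely prove both of the estimates on $v_m$ and the increments, or switch to a decomposition like the paper's.
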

Since when $G$ is amenable $\alpha^*(G)=\alpha^\#(G)$,
Theorem~\ref{thm:equialpha} is a generalization of~\eqref{eq:ANP}
when $X=L_2$.

A theorem of Guentner and Kaminker~\cite{GK04} states that if
$\alpha^\#(G)>\frac12$ then $G$ is amenable. Since for a
non-amenable group $G$ we have $\beta^*(G)=1$
(see~\cite{Kest59,Woess00}), Theorem~\ref{thm:equialpha} implies the
Guentner-Kaminker theorem, while generalizing it to non-Hilbertian
targets (when the target space $X$ is a Hilbert space our method
yields a very simple new proof of the Guentner-Kaminker
theorem---see Remark~\ref{rem:hilbert case}). Note that both known
proofs of the Guentner-Kaminker theorem, namely the original proof
in~\cite{GK04} and the new proof discovered by de Cornulier, Tessera
and Valette in~\cite{deCTesVal}, rely crucially on the fact that $X$
is a Hilbert space. It follows in particular from
Theorem~\ref{thm:equialpha} that for $2\le p<\infty$, if
$\alpha_p^\#(G)>\frac12$ then $G$ is amenable. This is sharp, since
in Section~\ref{sec:equi} we show that for the free group on two
generators $\F_2$, for every $2\le p<\infty$ we have
$\alpha_p^\#(\F_2)=\frac12$. This answers a question posed by
Tessera (see Question 1.6 in~\cite{Tess06}).

Theorem~\ref{thm:equialpha} isolates a geometric property (uniform
smoothness) of the target space $X$ which lies at the heart of the
phenomenon discovered by Guentner and Kaminker. Our proof is a
modification of the martingale method developed by Naor, Peres,
Schramm and Sheffield in~\cite{NPSS06} for estimating the speed of
stationary reversible Markov chains in uniformly smooth Banach
spaces. This method requires several adaptations in the present
setting since the random walk $\{W_t\}_{t=0}^\infty$ is not
stationary---we refer to Section~\ref{sec:equi} for the details.

Given two groups $G$ and $H$, the wreath product $G\bwr H$ is the
group of all pairs $(f,x)$ where $f:H\to G$ has finite support (i.e.
$f(z)= e_G$ for all but finitely many $z\in H$) and $x\in H$,
equipped with the product
$$
(f,x)(g,y)\coloneqq \left(z\mapsto f(z)g(x^{-1}z),xy\right).
$$
If $G$ is generated by the set $S\subseteq G$ and $H$ is generated
by the set $T\subseteq H$ then $G\bwr H$ is generated by the set
$\{(e_{G^H},t):\ t\in T\}\cup \{(\delta_s,e_H):\ s\in S\}$. Unless
otherwise stated we will always assume that $G\bwr H$ is equipped
with the word metric associated with this canonical set of
generators (although in most cases our assertions will be
independent of the choice of generators).

The behavior of the Hilbert compression exponent under wreath
products was investigated in~\cite{AGS06,Tess06,SV07,ANP07}. In
particular, Stalder and Valette proved in~\cite{SV07} that
\begin{eqnarray}\label{eq:SV}
\alpha^*(G\bwr \Z)\ge \frac{\alpha^*(G)}{\alpha^*(G)+1}.
\end{eqnarray}
Here we obtain the following improvement of this bound:
\begin{thm}\label{thm:hilbert} For every finitely generated group we
have,
\begin{eqnarray}\label{eq:first case}
\alpha^*(G)\ge \frac{1}{2}\implies \alpha^*(G\bwr \Z)\ge
\frac{2\alpha^*(G)}{2\alpha^*(G)+1},
\end{eqnarray}
and
\begin{eqnarray}\label{eq:second case}
\alpha^*(G)\le \frac{1}{2}\implies \alpha^*(G\bwr \Z)= \alpha^*(G).
\end{eqnarray}
\end{thm}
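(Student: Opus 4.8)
The plan is to prove both implications by constructing explicit Lipschitz embeddings of $G\bwr\Z$ into an $L_2$ space built out of good embeddings of the base group $G$ and of the lamplighter coordinate $\Z$. Fix $\alpha<\alpha^*(G)$ and a Lipschitz map $\varphi:G\to L_2$ with $\|\varphi(x)-\varphi(y)\|\gtrsim d_G(x,y)^\alpha$; we may normalize $\varphi(e_G)=0$. An element of $G\bwr\Z$ is a pair $(f,k)$ where $f:\Z\to G$ is finitely supported. The natural idea, going back to Stalder--Valette, is to encode $f$ by the vector $\bigoplus_{j\in\Z}\varphi(f(j))$ together with a term that records the position $k$ and the ``range'' that the lamplighter has visited. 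The improvement over \eqref{eq:SV} should come from using a \emph{smarter} cost accounting for moving the cursor: rather than paying $d_G$-cost proportionally to the number of lamps switched, one exploits that a single generator step in $G\bwr\Z$ either moves $k$ by one or changes one coordinate $f(j)$ by one generator of $G$, and one bundles the ``travel cost'' of the cursor using a real-valued functional with a fractional power.

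Concretely, I would embed $(f,k)$ into the direct sum of three pieces. First, the \emph{lamp configuration} piece: $\Phi_1(f,k) = \bigoplus_{j\in\Z} \psi_j\big(\varphi(f(j))\big)$ where $\psi_j$ is a rescaling chosen so that coordinates far from the visited interval are suppressed; the key point is that switching one lamp changes $\Phi_1$ by an amount controlled by the Lipschitz constant, while two configurations differing at a lamp $j$ inside the relevant interval differ by $\gtrsim d_G\big(f(j),g(j)\big)^\alpha$ up to the weight $\psi_j$. Second, a \emph{cursor} piece $\Phi_2(f,k)$ built from the standard embedding of $\Z$ (e.g. $k\mapsto$ the characteristic-function-type vector $\mathbf{1}_{[0,k)}\in L_2(\R)$, possibly composed with a power $t\mapsto t^{\beta}$ to tune the exponent), recording the current position. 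Third, a \emph{support-interval} piece $\Phi_3(f,k)$ recording the leftmost and rightmost nonzero lamps, since two configurations with the same lamps on the same interval but reached via cursors that wandered differently must still be separated — this is where one has to be careful. The metric $d_{G\bwr\Z}\big((f,k),(g,m)\big)$ is, up to constants, the ``travelling-salesman'' quantity: the length of the shortest walk on $\Z$ starting at $k$, ending at $m$, and visiting every $j$ where $f(j)\ne g(j)$, plus $\sum_j d_G\big(f(j),g(j)\big)$. One then checks Lipschitzness of $\bigoplus\Phi_i$ against generator moves (easy, since each generator touches one coordinate or shifts $k$ by one) and the lower bound $\|\bigoplus\Phi_i(f,k)-\bigoplus\Phi_i(g,m)\|\gtrsim d_{G\bwr\Z}\big((f,k),(g,m)\big)^{\alpha'}$ with $\alpha'=\frac{2\alpha}{2\alpha+1}$ in the regime $\alpha\ge\frac12$ and $\alpha'=\alpha$ when $\alpha\le\frac12$.

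The main obstacle, and the place where the exponent $\frac{2\alpha^*(G)}{2\alpha^*(G)+1}$ actually emerges, is the interaction between the ``horizontal'' cost (moving the cursor across an interval of length $\ell$, which costs $\ell$ in the word metric) and the ``vertical'' cost (the $d_G$-distances of the lamps, which the embedding only reproduces to the power $\alpha$). Crudely embedding the visited interval as an $\ell_1$-type object costs $\ell$ but is only reproduced as $\ell^{1/2}$ in $L_2$; reproducing the lamps costs their $\alpha$-snowflake. Balancing a single interval of length $\ell$ carrying a $d_G$-mass of order $\ell$ forces the trade-off $\ell^{1/2}$ vs. $\ell^{\alpha}$, and optimizing the split of the budget between the two mechanisms is exactly what yields $\frac{2\alpha}{2\alpha+1}$ when $\alpha\ge\frac12$ (so that $\alpha\ge 1/2$ guarantees $\ell^\alpha$ dominates $\ell^{1/2}$ and the interval length is ``paid for'' by the lamps), while for $\alpha\le\frac12$ the horizontal mechanism already gives exponent $\alpha$ and cannot be beaten, giving equality $\alpha^*(G\bwr\Z)=\alpha^*(G)$ together with the trivial reverse inequality $\alpha^*(G\bwr\Z)\le\alpha^*(G)$ (since $G$ embeds isometrically into $G\bwr\Z$). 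I expect the delicate technical point to be the lower bound when the two configurations $(f,k)$ and $(g,m)$ differ on a sparse but wide set of lamps, so that the optimal walk on $\Z$ is genuinely a non-trivial travelling-salesman path; handling this requires choosing the weights $\psi_j$ and the cursor embedding at multiple scales (a Assouad-type / wavelet decomposition over dyadic intervals of $\Z$), summing the contributions geometrically, and verifying that the worst case is indeed a single interval so that no logarithmic loss occurs. Once the single-scale estimate is in hand, summing over scales and taking $\alpha\uparrow\alpha^*(G)$ gives \eqref{eq:first case} and \eqref{eq:second case}.
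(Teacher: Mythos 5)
Your overall architecture (embed the lamp part with $\varphi$, embed the cursor/support part separately, balance the two) is the right shape, but the proposal is missing the two ingredients that actually produce the exponent $\frac{2\alpha^*(G)}{2\alpha^*(G)+1}$, and one of your heuristics is pointing in exactly the wrong direction.

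First, the metric factorization. The paper observes that
\begin{equation*}
d_{G\bwr \Z}\big((f,x),(g,y)\big)\approx d_{\mathscr{L}_G(\Z)}\big((f,x),(g,y)\big)+\sum_{z\in\Z} d_G\big(f(z),g(z)\big),
\end{equation*}
where $\mathscr{L}_G(\Z)$ is $G\bwr\Z$ with $G$ given the discrete generating set $G\setminus\{e_G\}$: the first summand is the travelling-salesman cost of the cursor together with ``which lamps changed'', the second is the total $G$-distance over all lamps. Your ``support-interval piece $\Phi_3$'' gestures at the first summand but never pins it down as a metric on its own right.

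Second, and crucially: the first summand has to be embedded with compression exponent close to $1$, not $\frac12$. Your heuristic that ``embedding the visited interval as an $\ell_1$-type object costs $\ell$ but is only reproduced as $\ell^{1/2}$ in $L_2$'' is precisely the intuition behind the weaker Stalder--Valette bound~\eqref{eq:SV}; if the cursor/support term really lost a square root you would never beat $\frac{\alpha}{\alpha+1}$. The improvement rests on Tessera's theorem that $\alpha^*(C_2\bwr\Z)=1$, upgraded to $\alpha^*(\mathscr{L}_G(\Z))=1$ by a simple randomization trick (Lemma~\ref{lem:lamp}: replace each lamp value $f(k)\in G$ by an i.i.d. Bernoulli $\e_{f(k)}$ and average). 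Once you have an embedding $\phi$ of $\mathscr{L}_G(\Z)$ with compression exponent $b$ arbitrarily close to $1$ and an embedding $\psi$ of $G$ with compression exponent $a$ close to $\alpha^*(G)$, the map $F(f,x)=\phi(f,x)\oplus\bigoplus_{z}\psi(f(z))$ is Lipschitz, and the lower bound is a one-line application of H\"older's inequality using $|\{z:f(z)\ne g(z)\}|\le d_{\mathscr{L}_G(\Z)}$, which balances $m^{b}$ against $n^{a}/m^{(2a-1)/2}$ and gives exponent $\frac{2ab}{2a+2b-1}\to\frac{2\alpha^*(G)}{2\alpha^*(G)+1}$. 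No Assouad-type multi-scale decomposition or travelling-salesman case analysis is needed; all of that is swallowed by the black-box fact $\alpha^*(\mathscr{L}_G(\Z))=1$. In short: your sketch correctly guesses the shape of the construction, but without identifying the factorization of the metric and the key input that the lamplighter-type part has full compression exponent, you cannot reach the stated exponent, and the calculation you outline would only reproduce~\eqref{eq:SV}.
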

We refer to Theorem~\ref{thm:Lp} for an analogous bound for
$\alpha_p(G\bwr \Z)$, as well as a more general estimate for
$\alpha_p(G\bwr H)$. In addition to improving~\eqref{eq:SV}, we will
see below instances in which~\eqref{eq:first case} is actually an
equality. In fact, we conjecture that~\eqref{eq:first case} holds as
an equality for every amenable group $G$.

\`Ershler~\cite{Ersh01} (see also~\cite{Rev03}) proved that
$\beta^*(G\bwr \Z)\ge\frac{1+\beta^*(G)}{2}$. More generally, in
Section~\ref{sec:beta} we show that
\begin{eqnarray}\label{eq:gromov}
\beta^*(G\bwr H)\ge \left\{\begin{array}{ll} \frac{1+\beta^*(G)}{2}
& \text{if $H$ has linear growth,}\\
1 & \text{otherwise.}\end{array}\right.
\end{eqnarray}
Since if $G$ is amenable then $G\bwr \Z$ is also amenable (see
e.g.~\cite{Pat88,KV83}) it follows that for an amenable group $G$,
\begin{eqnarray}\label{eq:beta}
\alpha^*(G\bwr \Z)\le \frac{1}{1+\beta^*(G)}.
\end{eqnarray}

\begin{cor}\label{cor:iterated} If $G$ is amenable and $\alpha^*(G)=\frac{1}{2\beta^*(G)}$ then
$$
\alpha^*(G\bwr \Z)=\frac{1}{2\beta^*(G\bwr
\Z)}=\frac{2\alpha^*(G)}{2\alpha^*(G)+1}.
$$
In particular, if we define iteratively $G_{(1)}\coloneqq G$ and
$G_{(k+1)}\coloneqq G_{(k)}\bwr \Z$, then for all $k\ge 1$,
$$
\alpha^*(G_{(k)})=\frac{2^{k-1}\alpha^*(G)}{(2^k-2)\alpha^*(G)+1}.
$$
\end{cor}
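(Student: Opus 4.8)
The plan is to first establish the equalities for $G\bwr\Z$ by sandwiching $\alpha^*(G\bwr\Z)$ between a lower bound coming from Theorem~\ref{thm:hilbert} and an upper bound coming from the Markov-type estimate~\eqref{eq:ANP} together with \`Ershler's speed bound, and then to deduce the iterated formula by induction and a one-line linear recurrence.

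\emph{Step 1 (the base estimate).} Since the simple random walk moves at unit speed one always has $\beta^*(G)\le 1$, so the hypothesis $\alpha^*(G)=\frac{1}{2\beta^*(G)}$ forces $\alpha^*(G)\ge\frac12$ (and also $\beta^*(G)\ge\frac12$, since $\alpha^*(G)\le 1$ for every infinite group, so no degenerate case arises). Hence the case~\eqref{eq:first case} of Theorem~\ref{thm:hilbert} applies and gives $\alpha^*(G\bwr\Z)\ge \frac{2\alpha^*(G)}{2\alpha^*(G)+1}$; substituting $2\alpha^*(G)=1/\beta^*(G)$ rewrites the right-hand side as $\frac{1}{1+\beta^*(G)}$. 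For the matching upper bound, recall that $G\bwr\Z$ is amenable whenever $G$ is, so~\eqref{eq:ANP} applied with $X=L_2$ (hence $p=2$) gives $\alpha^*(G\bwr\Z)\le \frac{1}{2\beta^*(G\bwr\Z)}$, and combining this with the linear-growth case of~\eqref{eq:gromov}, namely $\beta^*(G\bwr\Z)\ge\frac{1+\beta^*(G)}{2}$, yields $\alpha^*(G\bwr\Z)\le \frac{1}{1+\beta^*(G)}$. Thus every inequality in this chain is an equality: $\alpha^*(G\bwr\Z)=\frac{1}{1+\beta^*(G)}=\frac{2\alpha^*(G)}{2\alpha^*(G)+1}$, and moreover $\beta^*(G\bwr\Z)=\frac{1+\beta^*(G)}{2}$, so $\alpha^*(G\bwr\Z)=\frac{1}{2\beta^*(G\bwr\Z)}$. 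This proves the first assertion, and --- crucially for the iteration --- shows that $G\bwr\Z$ again satisfies the hypotheses of the corollary: it is amenable and satisfies $\alpha^*=\frac{1}{2\beta^*}$.

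\emph{Step 2 (the tower).} By induction on $k$, using Step~1 at each stage, every $G_{(k)}$ is amenable and satisfies $\alpha^*(G_{(k)})=\frac{1}{2\beta^*(G_{(k)})}$, so the sequence $a_k:=\alpha^*(G_{(k)})$ satisfies $a_1=\alpha^*(G)$ and $a_{k+1}=\frac{2a_k}{2a_k+1}$. Setting $b_k:=1/a_k$ linearizes this to $b_{k+1}=\tfrac12 b_k+1$, whose fixed point is $2$; hence $b_k-2=2^{1-k}(b_1-2)$, and clearing denominators gives $a_k=\frac{2^{k-1}\alpha^*(G)}{(2^k-2)\alpha^*(G)+1}$, as claimed. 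The whole argument is essentially bookkeeping on top of Theorem~\ref{thm:hilbert}, \`Ershler's inequality and the Markov-type bound~\eqref{eq:ANP}; the only point that requires attention is verifying in Step~1 that the property $\alpha^*=\frac{1}{2\beta^*}$ (together with amenability) is inherited by $G\bwr\Z$, which is precisely what makes the induction in Step~2 legitimate.
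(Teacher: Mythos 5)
Your proof is correct and follows essentially the same route as the paper, which simply states that the corollary ``follows immediately from Theorem~\ref{thm:hilbert} and the bound~\eqref{eq:beta}'' (the bound~\eqref{eq:beta}, $\alpha^*(G\bwr\Z)\le\frac{1}{1+\beta^*(G)}$, is itself derived in the paper from~\eqref{eq:ANP} and~\eqref{eq:gromov}, exactly as you reconstruct). Your Step~1 makes explicit the sandwich the authors have in mind, correctly observes that the hypothesis $\alpha^*=\frac{1}{2\beta^*}$ together with amenability is inherited by $G\bwr\Z$ (which is the point that makes the iteration legitimate), and Step~2 is the expected linearization of the recurrence $a_{k+1}=\frac{2a_k}{2a_k+1}$; both check out.
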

Corollary~\ref{cor:iterated} follows immediately from
Theorem~\ref{thm:hilbert} and the bound~\eqref{eq:beta}. Additional
results along these lines are obtained in Section~\ref{sec:embed};
for example (see Remark~\ref{rem:Z^2}) we deduce that
$\alpha^*\left(\Z\bwr \Z^2\right)=\frac12$.

For $r\in \N$ let $J(r)$ be the smallest constant $J>0$ such that
for every $f:G\to \R$ which vanishes outside the ball
$B(e,r)\coloneqq \left\{x\in G:\  d_G(x,e)\le r\right\}$, we have
$$
\left(\sum_{x\in G}f(x)^2\right)^{1/2}\le J\cdot \left(\sum_{x\in
G}\sum_{s\in S}|f(sx)-f(x)|^2\right)^{1/2}.
$$
Let $\text{a}^*(G)$ be the supremum over all $a\ge 0$ for which
there exists $c>0$ such that for all $r\in \N$ we have $J(r)\ge
cr^a$. Tessera proved in~\cite{Tess06} that $\alpha^*(G)\ge
\text{a}^*(G)$ and asked if it is true that
$\alpha^*(G)=\text{a}^*(G)$ for every amenable group $G$ (see
Question 1.4 in~\cite{Tess06}). Corollary~\ref{cor:iterated} implies
that the answer to this question is negative. Indeed,
Corollary~\ref{cor:iterated} implies that the amenable group
$(\Z\bwr \Z)\bwr\Z$ satisfies
\begin{eqnarray}\label{eq:yet1}
\alpha^*\big((\Z\bwr \Z)\bwr\Z\big)=\frac{4}{7}\quad
\mathrm{yet}\quad \text{a}^*\big((\Z\bwr \Z)\bwr\Z\big)\le \frac12.
\end{eqnarray}
In fact, the ratio $\text{a}^*(G)/\alpha^*(G)$ can be arbitrarily
small, since if we denote $\Z_{(1)}\coloneqq\Z$ and
$\Z_{(k+1)}\coloneqq\Z_{(k)}\bwr \Z$ then for $k\ge 2$,
\begin{eqnarray}\label{eq:yet2}
\alpha^*(\Z_{(k)})=\frac{1}{2-2^{1-k}} \quad \mathrm{yet}\quad
\text{a}^*(\Z_{(k)})\le \frac{1}{k-1}.
\end{eqnarray}
To prove~\eqref{eq:yet2}, and hence also its special
case~\eqref{eq:yet1}, note that the assertion in~\eqref{eq:yet2}
about $\alpha^*(\Z_{(k)})$ is a consequence of
Corollary~\ref{cor:iterated}. To prove the upper bound on
$\text{a}^*(\Z_{(k)})$ in~\eqref{eq:yet2} we note that if $G$ is a
finitely generated group such that the probability of return of the
standard random walk $\{W_t\}_{t=0}^\infty$ satisfies
\begin{eqnarray}\label{eq:return}\Pr[W_t=e]\le \exp\left(-Ct^\gamma\right)\end{eqnarray} for some $C,\gamma\in
(0,1)$ and all $t\in \N$, then
\begin{eqnarray}\label{eq:upper tessera}
\text{a}^*(G)\le \frac{1-\gamma}{2\gamma}.
\end{eqnarray}
This implies~\eqref{eq:yet2} since Pittet and
Saloff-Coste~\cite{PS02} proved that for all $k\ge 2$ there exists
$c,C>0$ such that for $G=\Z_{(k)}$ we have for all $t\ge 1$
\begin{eqnarray}\label{eq:PSL}
\exp\left(-Ct^{\frac{k-1}{k+1}}\left(\log
t\right)^{\frac{2}{k+1}}\right)\le \Pr\left[W_t=e\right]\le
\exp\left(-ct^{\frac{k-1}{k+1}}\left(\log
t\right)^{\frac{2}{k+1}}\right).
\end{eqnarray}

The bound~\eqref{eq:upper tessera} is essentially known. Indeed,
assume that $J(r)\ge cr^a$ for every $r\ge 1$. Following the
notation of Coulhon~\cite{Coul00}, for $v\ge 1$ let $\Lambda(v)$
denote the largest constant $\Lambda\ge 0$ such that for all
$\Omega\subseteq G$ with $|\Omega|\le v$, every $f:G\to \R$ which
vanishes outside $\Omega$ satisfies
$$
\Lambda\cdot \sum_{x\in G}f(x)^2\le \sum_{x\in G}\sum_{s\in
S}|f(sx)-f(x)|^2.
$$
Since for $r\ge 2$ we have $|B(e,r)|\le |S|^r$, it follows
immediately from the definitions that $J(r)^2\le
\frac{1}{\Lambda(|S|^r)}$. Theorem 7.1 in~\cite{Coul00} implies that
there exists a constant $K>0$ such that if $e^{Kt^\gamma}\ge |S|$
then,
\begin{multline*}
t\ge \int_{|S|}^{e^{Kt^{\gamma}}}
\frac{dv}{v\Lambda(v)}=\int_1^{\frac{Kt^\gamma}{\log |S|}}
\frac{\log |S|}{\Lambda(|S|^r)}dr\ge \log|S|
\int_1^{\frac{Kt^\gamma}{\log |S|}} J(r)^2 dr\\\ge c^2\log
|S|\int_1^{\frac{Kt^\gamma}{\log |S|}} r^{2a}dr=\frac{c^2\log
|S|}{(2a+1)}\left(\left(\frac{Kt^\gamma}{\log
|S|}\right)^{2a+1}-1\right).
\end{multline*}
Letting $t\to\infty$ it follows that $(2a+1)\gamma\le 1$,
implying~\eqref{eq:upper tessera}.

\begin{remark} {\em In~\cite{Tess06} Tessera asserted that if the opposite inequality
to~\eqref{eq:return} holds true, i.e. if we have $\Pr[W_t=e]\ge
\exp\left(-Kt^\gamma\right)$ for some $\gamma\in (0,1)$, $K>0$, and
every $t\ge 1$, then $\text{a}^*(G)\ge 1-\gamma$. Unfortunately,
this claim is false in general.\footnote{This remark concerns the version
\url{http://arxiv.org/abs/math/0603138v3} of~\cite{Tess06}; after we informed the author
 of this mistake, it was corrected in later versions of ~\cite{Tess06} .} Indeed, if it were true, then using~\eqref{eq:PSL}
we would deduce that $$\text{a}^*\Big(\Big(\big(\Z\bwr \Z\big)\bwr\
\Z\Big)\bwr \Z\Big)=\text{a}^*(\Z_{(4)})\ge \frac25 \, ,$$ but
from~\eqref{eq:yet2} we know that
 $\text{a}^*(\Z_{(4)})\le \frac13$. On inspection of the proof of
Proposition 7.2 in  ~\cite{Tess06} we
see that the argument given there actually yields the lower bound
$\text{a}^*(G)\ge \frac{1-\gamma}{2}$ (note the squares in the first
equation of the proof of Proposition 7.2 in
[\url{http://arxiv.org/abs/math/0603138v3}]). Thus, the original argument presented in  \cite{Tess06}
to establish the lower bound $\text{a}^*(\Z\bwr \Z)\ge \frac23$ only proves that $\text{a}^*(\Z\bwr \Z)\ge \frac13$.
Nevertheless, the lower bound of $\frac23$, which was used crucially
in~\cite{ANP07}, is correct, as follows from our
Theorem~\ref{thm:hilbert}. After the present paper was posted and
sent to Tessera, he  replaced the original argument in~\cite{Tess06}
for the lower bound $\alpha^*(\Z\bwr \Z)\ge \frac23$ by a correct
argument, along the same lines as our proof of
Theorem~\ref{thm:hilbert}.
 \fin}
\end{remark}

In Section~\ref{sec:embed} we show that the cyclic lamplighter group
$C_2\bwr C_n$ admits a bi-Lipschitz embedding into $L_1$ with
distortion independent of $n$ (here, and in what follows $C_n$
denotes the cyclic group of order $n$). This answers a question
posed in~\cite{LeeNaoPer} and in~\cite{ANV07}. In
Section~\ref{sec:edge} we use the notion of Hilbert space
compression to show that $\Z\bwr \Z$ has edge Markov type $p$ for
any $p<\frac43$, but it does not have Enflo type $p$ for any $p>1$.
We refer to Section~\ref{sec:edge} for the relevant definitions.
This result shows that there is no metric analogue of the well known
Banach space phenomenon ``equal norm Rademacher type $p$ implies
Rademacher $p'$ for every $p'<p$" (see~\cite{T-J89}). Finally, in
Section~\ref{sec:open} we present several open problems that arise
from our work.

\section{Equivariant compression and random walks}\label{sec:equi}

In what follows we will use $\approx$ and $\lesssim$, $\gtrsim$ to
denote, respectively, equality or the corresponding inequality up to
some positive multiplicative constant.

Let $X$ be a Banach space. We denote the group of linear isometric
automorphisms of $X$ by $\Isom(X)$. Fix a homomorphism $\pi:G\to
\Isom(X)$, i.e. an action of $G$ on $X$ by linear isometries. A
function $f:G\to X$ is called a $1$-cocycle with respect to $\pi$ if
for every $x,y\in G$ we have $f(xy)=\pi(x)f(y)+f(x)$. The space of
all $1$-cocycles with respect to $\pi$ is denoted $Z^1(G,\pi)$.
Equivalently, $f\in Z^1(G,\pi)$ if and only if $v\mapsto
\pi(x)v+f(x)$ is an action of $G$ on $X$ by affine isometries. A
function $f:G\to X$ is called a $1$-cocycle if there exists a
homomorphism $\pi:G\to \Isom(X)$ such that $f\in Z^1(G,\pi)$. A
mapping $\psi:G\to X$ is called equivariant if it is given by the
orbit of a vector $v\in X$ under an { affine} isometric action of
$G$ on $X$, or equivalently  $\psi(x)=\pi(x)v+f(x)$ for some
homomorphism $\pi:G\to \Isom(X)$ and $f\in Z^1(G,\pi)$. Note that
since the function $x\mapsto \pi(x)v$ is bounded, the compression
exponents of $\psi$ and $f$ coincide. Therefore in order to bound
the equivariant compression exponent of $G$ in $X$ it suffices to
study the growth rate of $1$-cocycles.

Recall the definition~\eqref{def:smoothness} of the modulus of
uniform smoothness $\rho_X(\tau)$, and that $X$ is said to have
modulus of smoothness of power type $p$ if there exists a constant
$K$ such that $\rho_X(\tau)\le K\tau^p$ for all $\tau>0$. By
Proposition 7 in~\cite{BCL94}, $X$ has modulus of smoothness of
power type $p$ if and only if there exists a constant $S>0$ such
that for every $x,y\in X$
\begin{eqnarray}\label{eq:two point}
\|x+y\|^p+\|x-y\|^p\le 2\,\|x\|^p+2\,S^p\,\|y\|^p.
\end{eqnarray}
The infimum over all $S$ for which \eqref{eq:two point} holds is
called the $p$-smoothness constant of $X$, and is denoted $S_p(X)$.

It was shown in~\cite{BCL94} (see also~\cite{figiel}) that
  $S_2(L_p)\le
\sqrt{p-1}$ for $2\le p<\infty$ and $S_p(L_p)\le 1$ for $1\le p\le
2$ (the order of magnitude of these constants was first calculated
in~\cite{hanner}).

Our proof of Theorem~\ref{thm:equialpha} is based on the following
inequality, which is of independent interest. Its proof is a
modification of the method that was used in~\cite{NPSS06} to study
the Markov type of uniformly smooth Banach spaces.

\begin{thm}\label{thm:equimarkov}
Let $X$ be a Banach space with modulus of smoothness of power type
$p$, and assume that $f:G\to X$ is a $1$-cocycle. Then for every
time $t\in \mathbb N$,
$$
\E \left[\|f(W_t)\|^p\right]\le
C_p(X)t\cdot\E\left[\|f(W_{1})\|^p\right],
$$
where $C_p(X)= \frac{2^{2p}S_p(X)^p}{2^{p-1}-1}$.
\end{thm}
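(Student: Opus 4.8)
The plan is to carry out the martingale argument of~\cite{NPSS06} in the present non-stationary setting. Write $W_t=g_1g_2\cdots g_t$ with $g_1,\dots ,g_t$ independent and uniform on $S$, put $\mathcal F_k:=\sigma(g_1,\dots ,g_k)$, and unfold the cocycle relation $f(xy)=\pi(x)f(y)+f(x)$ to $f(W_t)=\sum_{k=1}^{t}\pi(W_{k-1})f(g_k)$. Because each $\pi(W_{k-1})\in\Isom(X)$, the increments are isometric copies of the one-step values, so $\E\|f(W_1)\|^{p}=\tfrac1{|S|}\sum_{s\in S}\|f(s)\|^{p}$. I would first record the Hilbertian special case $p=2$ as a model: expanding $\E\|f(W_t)\|^2$ into diagonal and off-diagonal terms, conditioning each cross term on $\mathcal F_{k-1}$, using independence of the not-yet-taken steps and the identity $\tfrac1{|S|}\sum_{s\in S}\pi(s)^{-1}f(s)=-\tfrac1{|S|}\sum_{s\in S}f(s)$ (which uses only $S=S^{-1}$ together with $f(e)=0$), one gets $\E\|f(W_t)\|^2=t\,\E\|f(W_1)\|^2-2\sum_{i=1}^{t-1}\langle u_1,u_i\rangle$, where $u_i:=\E f(W_i)$. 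Since $S$ is symmetric, the averaging operator $P:=\tfrac1{|S|}\sum_{s\in S}\pi(s)$ is self-adjoint with $\|P\|\le 1$; as $u_i=(\mathrm{Id}+P+\dots+P^{i-1})u_1$, each $\langle u_1,u_i\rangle=\int \tfrac{1-\lambda^{i}}{1-\lambda}\,d\mu_{u_1}(\lambda)\ge 0$, so $\E\|f(W_t)\|^2\le t\,\E\|f(W_1)\|^2$. This is the source of the clean constant; the problem for general $X$ is to retain enough of this cancellation.

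For a general $p$-smooth $X$ I would introduce the $X$-valued martingale $M_k:=\E\!\left[f(W_t)\mid\mathcal F_k\right]$. Unfolding the cocycle relation and using independence of $g_{k+1},\dots ,g_t$ from $\mathcal F_k$ gives the explicit formula $M_k=f(W_k)+\pi(W_k)\,u_{t-k}$, so $M_0=u_t=\E f(W_t)$, $M_t=f(W_t)$, and
$$M_k-M_{k-1}=\pi(W_{k-1})\Big[\big(f(g_k)-u_1\big)+\big(\pi(g_k)-P\big)u_{t-k}\Big],$$
which is an isometric copy of a genuinely random term of $L_p$-size $\lesssim \E\|f(W_1)\|$ plus a conditionally mean-zero ``drift fluctuation''. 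I would then prove the martingale inequality $\E\|M_t-M_0\|^{p}\le \big(2S_p(X)\big)^{p}\sum_{k=1}^{t}\E\|M_k-M_{k-1}\|^{p}$ by the usual device: pass from $d_k:=M_k-M_{k-1}$ to $d_k-d_k'$ for an independent conditional copy $d_k'$ (this is conditionally symmetric and only lowers the relevant $L_p$ norm, by Jensen), apply the two-point inequality~\eqref{eq:two point}, and telescope. Together with $\|M_0\|=\|u_t\|$ this reduces the theorem to the estimates $\|u_t\|^{p}\lesssim t\,\E\|f(W_1)\|^{p}$ and $\sum_{k=1}^{t}\E\big\|(\pi(g_k)-P)u_{t-k}\big\|^{p}\lesssim t\,\E\|f(W_1)\|^{p}$.

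The hard part is exactly these last two estimates: the trivial bound $\|u_s\|\le s\|u_1\|$ loses a factor of order $t^{p-1}$, and the cancellation one must find is precisely the nonnegative quantity that was explicit in the Hilbertian computation but invisible to the crude martingale bound. I would recover it by a dyadic recursion for $\phi(t):=\E\|f(W_t)\|^{p}$. Splitting the length-$2t$ walk into its two independent halves $W_{2t}=AB$, one has $f(W_{2t})=f(A)+\pi(A)f(B)$, and because $S=S^{-1}$ the element $A^{-1}$ is an independent length-$t$ walk with $\pi(A^{-1})f(A)=-f(A^{-1})$, so $\|f(A)\pm\pi(A)f(B)\|=\|f(B)\mp f(A^{-1})\|$. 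Feeding this into the two-point inequality~\eqref{eq:two point} gives the immediate but too-weak bound $\phi(2t)\le 2\big(1+S_p(X)^{p}\big)\phi(t)$; the point is to lower-bound the discarded term $\E\|f(A^{-1})+f(B)\|^{p}$, using that $f(A^{-1})+f(B)$ differs from $f$ of the length-$2t$ concatenation $A^{-1}B$ only by the controllable error $(\pi(A^{-1})-\mathrm{Id})f(B)$, together with the inductive hypothesis on $\phi$. Done carefully this upgrades the recursion to one of the form $\phi(2t)\le 2^{2-p}\,\phi(t)+c\,S_p(X)^{p}\,t\,\E\|f(W_1)\|^{p}$; iterating it for $t$ a power of $2$ sums a geometric series of ratio $2^{1-p}$, producing the factor $\tfrac1{2^{p-1}-1}$, after which a routine padding argument extends the bound to all $t$. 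Tracking the constants through the symmetrization, the two-point inequality, and the geometric sum yields $C_p(X)=\tfrac{2^{2p}S_p(X)^{p}}{2^{p-1}-1}$. I expect the delicate point to be obtaining the decisive factor $2^{2-p}<2$ (rather than $2(1+S_p(X)^{p})$) in that recursion: that is exactly what makes the final bound linear in $t$ instead of polynomial, and it is where the symmetry of $S$ and the cocycle identity have to be used in tandem.
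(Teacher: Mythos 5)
Your starting observations are sound: unfolding the cocycle, recognizing that the naive sum $\sum_k\pi(W_{k-1})f(g_k)$ fails to be a martingale because $\E f(g_k)=v\neq 0$, computing the Doob martingale $M_k=\E[f(W_t)\mid\mathcal F_k]=f(W_k)+\pi(W_k)u_{t-k}$ and its increments, and realizing that the whole difficulty has been pushed into the boundary term $\|M_0\|=\|u_t\|$ and the drift-fluctuation pieces $(\pi(g_k)-P)u_{t-k}$. But your proposed fix for that difficulty --- the dyadic recursion --- has a genuine gap, and I do not see how to close it along the lines you sketch.

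Concretely: writing $X=f(B)$ and $Y=f(A^{-1})$ (i.i.d.\ copies of $f(W_t)$) with $\|f(W_{2t})\|=\|X-Y\|$, the two-point inequality gives
$\E\|X+Y\|^p+\phi(2t)\le (2+2S_p(X)^p)\,\phi(t)$,
so one needs a \emph{lower} bound on $\E\|X+Y\|^p$ that brings $(2+2S_p(X)^p)$ down to something $<2$. You propose that $X+Y=f(A^{-1})+f(B)$ is close to $f(A^{-1}B)$, whose $p$-th moment is $\phi(2t)$, "up to the controllable error $(\pi(A^{-1})-\mathrm{Id})f(B)$." That error is not controllable: $\pi$ can act with no almost-fixed vectors, so $\|(\pi(A^{-1})-\mathrm{Id})f(B)\|$ is generically comparable to $\|f(B)\|$ itself, and there is nothing to subtract off. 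And even granting the optimistic estimate $\E\|X+Y\|^p\ge\phi(2t)-O(\phi(t))$, the resulting recursion is $\phi(2t)\le(1+S_p(X)^p)\phi(t)+O(\phi(t))$, which --- since $S_p(X)\ge 1$ always --- has ratio $\ge 2$ and therefore only yields $\phi(t)\lesssim t^{\log_2(1+S_p(X)^p)}$, not the linear bound. The decisive factor $2^{2-p}$ in your target recursion does not come out of any of the steps you list; it is exactly the cancellation that is missing. (Your Hilbertian warm-up does not help here: in $L_2$ the subtracted term $\E\|X+Y\|^2=2\E\|X\|^2+2\|\E X\|^2$ is computed \emph{exactly} and is manifestly nonnegative with the right constant, which is a special feature of the parallelogram law and has no $p$-smooth analogue.) Finally, note that if the recursion $\phi(t)\lesssim t$ were available, the Doob-martingale scaffolding would be redundant, since that recursion is already the theorem.

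The paper's proof circumvents the $\|u_t\|$ obstruction entirely by a different, and decisive, algebraic idea: it proves by induction the identity
$2f(W_t)=\sum_{j=1}^t\pi(W_{j-1})f(\sigma_j)-\sum_{j=1}^t\pi(W_j)f(\sigma_j^{-1})$
(using only the cocycle relation and $f(x)=-\pi(x)f(x^{-1})$), and then centers by $v=\E f(W_1)$ to define $M_t=\sum_j\pi(W_{j-1})(f(\sigma_j)-v)$ and the ``time-reversed'' companion $N_t=\sum_j\pi(W_t^{-1}W_j)(f(\sigma_j^{-1})-v)$. Then $M_t$ is a genuine martingale with $M_0=0$ and increments of norm $\|f(\sigma_k)-v\|$, while $N_t$ is equal in distribution to $M_t$ (because $S=S^{-1}$), and the identity becomes $2f(W_t)=M_t-\pi(W_t)N_t+(I-\pi(W_t))v$, so that $\E\|f(W_t)\|^p$ is controlled by $\E\|M_t\|^p$ plus the trivially bounded $\|v\|^p$. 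Pisier's martingale inequality then finishes the proof in one step. So the two ingredients you would need for your route --- a way to kill $\|u_t\|$ and a source of the geometric-series factor $(2^{p-1}-1)^{-1}$ --- are both supplied in the paper by the single trick of pairing the forward sum with its distributionally-equal reverse, rather than by a dyadic recursion.
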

Theorem~\ref{thm:equimarkov} shows that images of
$\{W_t\}_{t=0}^\infty$ under $1$-cocycles satisfy an inequality
similar to the Markov type inequality (note that
$f(W_0)=f(e)=f(e\cdot e)=\pi(e)f(e)+f(e)=2f(e)$, whence $f(e)=0$).
We stress that one cannot apply Markov type directly in this case
because of the lack of stationarity of the Markov chain
$\{f(W_t)\}_{t=0}^\infty$. We overcome this problem by crucially
using the fact that $f$ is a $1$-cocycle.

Before proving Theorem~\ref{thm:equimarkov} we show how it implies
Theorem~\ref{thm:equialpha}.

\begin{proof}[Proof of Theorem~\ref{thm:equialpha}] Observe
that~\eqref{eq:sharp} is trivial if $\alpha^\#_X(G)\le\frac{1}{p}$
(since $\beta^*(G)\le 1$). So, we may assume that
$\alpha^\#(G)>\frac{1}{p}$. Fix $\frac{1}{p}\le \alpha<
\alpha^\#_X(G)$ and $0<\beta<\beta^*(G)$. Then there exists a
$1$-cocycle $f:G\to X$ satisfying
$$
x,y\in G\implies d_G(x,y)^\alpha\lesssim \|f(x)-f(y)\|\lesssim
d_G(x,y).
$$
In addition we know that $\E\left[d_G(W_t,e)\right]\gtrsim t^\beta$.
An application of Theorem~\ref{thm:equimarkov} yields
\begin{eqnarray}\label{eq:upper use new markov}
\E \left[\|f(W_t)\|^p\right]\lesssim
t\E\left[\|f(W_{1})\|^p\right]=t\E\left[\|f(W_{1})-f(e)\|^p\right]\lesssim
t \E \left[d_G(W_{1},e)^p\right]=t.
\end{eqnarray}
On the other hand, since $p\alpha\ge 1$ we may use Jensen's
inequality to deduce that
\begin{eqnarray}\label{eq:lower use new markov}
\E \left[\|f(W_t)\|^p\right]=\E
\left[\|f(W_t)-f(e)\|^p\right]\gtrsim \E
\left[d_G(W_t,e)^{p\alpha}\right]\ge \big(\E\left[
d_G(W_t,e)\right]\big)^{p\alpha}\gtrsim t^{p\alpha\beta}.
\end{eqnarray}
Combining~\eqref{eq:upper use new markov} and~\eqref{eq:lower use
new markov}, and letting $t\to \infty$, implies that
$p\alpha\beta\le 1$, as required.
\end{proof}

\begin{remark}\label{rem:optimal} {\em Theorem~\ref{thm:equialpha} is
optimal for the class of $L_p$ spaces. Indeed let $\F_2$ denote the
free group on two generators. We claim that for every $p\ge 1$,
\begin{eqnarray}\label{eq:GK}
\alpha^\#_p(\F_2)=\max\left\{\frac12,\frac{1}{p}\right\}.
\end{eqnarray}
Observe that since (trivially) $\beta^*(\F_2)=1$,
Theorem~\ref{thm:equialpha} implies that
$\alpha^\#_p(\F_2)\le\max\left\{\frac12,\frac{1}{p}\right\}$. In the
reverse direction Guentner and Kaminker~\cite{GK04} gave a simple
construction of an equivariant mapping $f:\F_2\to L_p$ satisfying
$\|f(x)-f(y)\|_p\ge d_{\F_2}(x,y)^{1/p}$ for all $x,y\in \F_2$. This
implies~\eqref{eq:GK} for $1\le p\le 2$. The case $p\ge 2$ follows
from Lemma~\ref{lem:p>2} below.\fin}
\end{remark}

\begin{lem}\label{lem:p>2} For every finitely generated group $G$ and every $p\ge
1$ we have $\alpha_p^\#(G)\ge \alpha_2^\#(G)$.
\end{lem}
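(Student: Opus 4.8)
The plan is to upgrade an equivariant embedding into Hilbert space to an equivariant embedding into $L_p$ without loss in the compression exponent, using the standard fact that for $p\ge 1$ the metric space $(L_2,\|\cdot\|_2^{2/p})$ embeds isometrically into $L_p$ (equivalently, $\|x-y\|_2^{2/p}$ is a kernel of negative type, so by Schoenberg's theorem it is realized as a squared distance, hence a distance, in some $L_p$). First I would fix $\alpha<\alpha_2^\#(G)$ and take an equivariant map $\psi:G\to L_2$, arising from an affine isometric action with linear part $\pi:G\to\Isom(L_2)$ and cocycle $f\in Z^1(G,\pi)$, satisfying $d_G(x,y)^\alpha\lesssim\|f(x)-f(y)\|_2\lesssim d_G(x,y)$. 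Since the bounded term $x\mapsto\pi(x)v$ does not affect compression, it suffices to work with $f$ itself.

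The key point is to produce from $f$ a new cocycle into $L_p$ whose norm behaves like $\|f(\cdot)\|_2^{2/p}$. For this I would invoke the following classical construction: if $\pi$ is an orthogonal (unitary) representation of $G$ on a Hilbert space $\mathcal H$ and $f\in Z^1(G,\pi)$, then for every $0<\theta\le 1$ there is an isometric linear action $\pi_\theta$ of $G$ on some $L_p$-space together with a cocycle $f_\theta\in Z^1(G,\pi_\theta)$ such that $\|f_\theta(x)-f_\theta(y)\|_p=\|f(x)-f(y)\|_2^{\theta}$ for all $x,y$; taking $\theta=2/p$ (which lies in $(0,1]$ since $p\ge2$... but we need all $p\ge1$). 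For $1\le p\le 2$ the inequality $\alpha_p^\#(G)\ge\alpha_2^\#(G)$ is actually immediate because $L_2$ embeds isometrically and equivariantly into $L_p$ (indeed $L_2$ is linearly isometric to a subspace of $L_p$ for $p\le 2$, and one transports the action), so the content is the case $p\ge 2$, where $\theta=2/p\in(0,1]$ is exactly the admissible range. Concretely, one realizes $\|f(x)-f(y)\|_2^{2/p}$ as a negative-type kernel on $G$, applies the GNS-type construction to get a cocycle $f_{2/p}$ into $L_p$, and checks $G$-equivariance of the construction (the kernel $(x,y)\mapsto\|f(x)-f(y)\|_2^{2/p}$ is $G$-invariant because $f$ is a cocycle and $\pi$ acts by isometries, so the associated affine action carries a $G$-action by affine isometries).

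Then I would simply read off the exponents: $\|f_{2/p}(x)-f_{2/p}(y)\|_p=\|f(x)-f(y)\|_2^{2/p}$, so the lower bound gives $\|f_{2/p}(x)-f_{2/p}(y)\|_p\gtrsim d_G(x,y)^{2\alpha/p}$ and the upper (Lipschitz) bound gives $\|f_{2/p}(x)-f_{2/p}(y)\|_p\lesssim d_G(x,y)^{2/p}\le d_G(x,y)$ since $2/p\le 1$ and $d_G\ge 1$ off the diagonal. Hence $f_{2/p}$ is a Lipschitz equivariant map witnessing $\alpha_p^\#(G)\ge 2\alpha/p$. Letting $\alpha\uparrow\alpha_2^\#(G)$ yields $\alpha_p^\#(G)\ge \tfrac{2}{p}\alpha_2^\#(G)$. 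This is weaker than what is claimed when $\alpha_2^\#(G)>1/2$; to get the full statement $\alpha_p^\#(G)\ge\alpha_2^\#(G)$ one must instead snowflake by the weaker exponent $\theta$ that still keeps the Lipschitz bound and lower bound aligned — the right move is to not snowflake at all but to use that for $p\ge2$, $L_2$ embeds \emph{isometrically} (not merely as a metric quotient) into $L_p$ via Gaussian random variables: the map sending an orthonormal basis of a separable Hilbert space to i.i.d.\ standard Gaussians extends to a linear isometry $\mathcal H\hookrightarrow L_p$ (after rescaling by $\|g\|_p^{-1}$), and this is plainly equivariant for any orthogonal action. The hard part will be verifying equivariance cleanly: one needs the linear isometric embedding $L_2\hookrightarrow L_p$ to intertwine the given orthogonal action $\pi$ on $L_2$ with an isometric action on $L_p$. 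Since an orthogonal transformation of $\mathcal H$ maps i.i.d.\ Gaussians to i.i.d.\ Gaussians in distribution, it induces a measure-preserving transformation on the Gaussian probability space, hence an isometry of $L_p$; chasing this through shows the composed map $\psi$ followed by the Gaussian isometry is equivariant with the same compression exponent, completing the proof.
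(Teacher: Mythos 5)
Your final argument is essentially the paper's proof: realize $L_2$ inside $L_p$ as the Gaussian Hilbert space (the closed span of i.i.d.\ Gaussians), and observe that any unitary on the underlying Hilbert space acts as a measure-preserving automorphism of the Gaussian probability space, hence extends to a linear isometry of $L_p$; transporting the cocycle $f$ along this embedding preserves the cocycle identity and all distances up to the fixed factor $\|g_1\|_{L_p}$, so the compression exponent is unchanged. The snowflake detour via negative type (which you correctly reject, since it only yields $\alpha_p^\#(G)\ge\frac{2}{p}\alpha_2^\#(G)$ for $p>2$) is not in the paper, and your parenthetical that the case $1\le p\le 2$ is ``immediate'' is a little too quick — getting a \emph{linear} isometric copy of $L_2$ inside $L_p$ is easy, but getting the group action to transport is exactly the point, and the Gaussian construction handles all $p\ge 1$ uniformly, which is how the paper treats it.
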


\begin{proof} In what follows we denote the standard orthonormal basis of $\ell_2(\C)$ by $(e_j)_{j=1}^\infty$.
Let $\gamma$ denote the standard Gaussian measure on $\C$. Consider
the countable product $\Omega\coloneqq \C^{\aleph_0}$, equipped with
the product measure $\mu\coloneqq \gamma^{\aleph_0}$. Let $H$ denote
the subspace of $L_2(\Omega,\mu)$ consisting of all linear
functions. Thus, if we consider the coordinate functions
$g_j:\Omega\to \C$ given by $g(z_1,z_2,\ldots)=z_j$ then $H$ is the
space of all functions $h:\Omega\to \C$ of the form
$h=\sum_{j=1}^\infty a_jg_j$, where the sequence
$(a_j)_{j=1}^\infty\subseteq \C$ satisfies $\sum_{j=1}^\infty
|a_j|^2<\infty$, i.e. $(a_j)_{j=1}^\infty\in \ell_2(\C)$. Note that
we are using here the standard probabilistic fact
(see~\cite{Durrett96}) that $\sum_{j=0}^\infty a_jg_j$ converges
almost everywhere, and has the same distribution as
$\left(\sum_{i=1}^\infty |a_i|^2\right)^{1/2} g_1$ (since
$\{g_j\}_{j=1}^\infty$ are i.i.d. standard complex Gaussian random
variables). This fact also implies  that for every unitary operator
$U:\ell_2(\C)\to \ell_2(\C)$,
$$
Uz\coloneqq \left(\sum_{k=1}^\infty \langle Ue_k,e_j\rangle
z_j\right)_{k=1}^\infty\in \Omega,
$$
is well defined for almost $z\in \Omega$, and therefore $U$ can be
thought of as a measure preserving automorphism $U:\Omega\to \Omega$
(we are slightly abusing notation here, but this will not create any
confusion).

Fix a unitary representation $\pi:G\to \Isom\big(\ell_2(\C)\big)$
and a cocycle $f\in Z^1(G,\pi)$ which satisfies
\begin{eqnarray}\label{eq: into gaussian hilbert}
x,y\in G\implies d_G(x,y)^\alpha\lesssim
\|f(x)-f(y)\|_{\ell_2(\C)}\lesssim d_G(x,y).
\end{eqnarray}
For $x\in G$ and $h\in L_p(\Omega,\mu)$ define $\widetilde
\pi(x)h\in L_p(\Omega,\mu)$ by $\widetilde \pi(x)h(z)=h(\pi(x)z)$.
By the above reasoning, since $\pi(x)$ is a measure preserving
automorphism of $(\Omega,\mu)$, $\widetilde \pi(x)$ is a linear
isometry of $L_p(\Omega,\mu)$, and hence $\widetilde \pi :G\to
\Isom\big(L_p(\Omega,\mu)\big)$ is a homomorphism. Note that since
all the elements of $H$ have a Gaussian distribution, all of their
moments are finite. Hence $H\subseteq L_p(\Omega,\pi)$. We can
therefore define $\widetilde f: G\to L_p(\Omega,\mu)$ by $\widetilde
f(x)\coloneqq \sum_{j=1}^\infty \langle f(x),e_j\rangle g_j\in
H\subseteq L_p(\Omega,\mu)$. It is immediate to check that
$\widetilde f\in Z^1\big(G,\widetilde \pi\big)$ and that for every
$x,y\in G$ we have $\left\|\widetilde f(x)-\widetilde
f(y)\right\|_{L_p(\Omega,\mu)}=\|g_1\|_{L_p(\Omega,\mu)}\cdot\|f(x)-f(y)\|_{\ell_2(\C)}$.
Hence $\widetilde f$ satisfies~\eqref{eq: into gaussian hilbert} as
well.
\end{proof}

\begin{remark} {\em Lemma~\ref{lem:p>2} actually establishes the following
fact: there exists a measure space $(\Omega,\mu)$ and a subspace
$H\subseteq \bigcap _{p\ge 1} L_p(\Omega,\mu)$ which is closed in
$L_p(\Omega,\mu)$ for all $1\le p<\infty$ and such that the
$L_p(\Omega,\mu)$ norm restricted to $H$ is proportional to the
$L_2(\Omega,\mu)$ norm. For any group $G$, any unitary
representation $\pi: G\to \Isom(H)$ can be extended to a
homomorphism $\widetilde \pi : G\to \Isom\big(L_p(\Omega,\mu)\big)$.
The space $H$ is widely used in Banach space theory, and is known as
the {\em Gaussian Hilbert space}. The above corollary about the
extension of group actions was previously noted in~\cite{BFGM07}
under the additional restriction that $1<p\notin 2\Z$, as a simple
corollary of an abstract extension theorem due to
Hardin~\cite{Hardin81} (alternatively this is also a corollary of
the classical Plotkin-Rudin theorem~\cite{Plotkin71,Rudin76}).
Lemma~\ref{lem:p>2} shows that no restriction on $p$ is necessary,
while the theorem of Hardin used in~\cite{BFGM07} does require the
above restriction on $p$. The key point here is the use of the
particular subspace $H\subseteq L_p(\Omega,\mu)$ for which unitary
operators have a simple explicit extension to a linear isometric
automorphism of $L_p(\Omega,\mu)$ for any $1\le p<\infty$.\fin}
\end{remark}

We shall now pass to the proof of Theorem~\ref{thm:equimarkov}. We
will use uniform smoothness via the following famous inequality due
to Pisier~\cite{Pisier75} (for the explicit constant below see
Theorem 4.2 in~\cite{NPSS06}).

\begin{thm}[Pisier]\label{thm:martingale} \, Fix $1<p\le 2$ and let
$\{M_k\}_{k=0}^n\subseteq X$ be a martingale in $X$. Then
$$
\E \left[\|M_n-M_0\|^p\right]\le
\frac{S_p(X)^p}{2^{p-1}-1}\cdot\sum_{k=0}^{n-1}\E
\left[\|M_{k+1}-M_k\|^p\right].
$$
\end{thm}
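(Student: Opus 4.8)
The final statement to prove is Pisier's martingale inequality (Theorem~\ref{thm:martingale}): for $1<p\le 2$ and a martingale $\{M_k\}_{k=0}^n$ in a Banach space $X$ with $p$-smoothness constant $S_p(X)$,
$$
\E\left[\|M_n-M_0\|^p\right]\le \frac{S_p(X)^p}{2^{p-1}-1}\sum_{k=0}^{n-1}\E\left[\|M_{k+1}-M_k\|^p\right].
$$

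\textbf{Overall approach.} The plan is to run an induction on the length $n$ of the martingale, using the two-point smoothness inequality~\eqref{eq:two point} conditionally at each step together with a symmetrization trick to kill the cross term. Write $D_{k+1}=M_{k+1}-M_k$ for the martingale differences, so $\E[D_{k+1}\mid \mathcal F_k]=0$, where $\mathcal F_k$ is the natural filtration. The key quantity to control is $\E\|M_n-M_0\|^p$, and the heart of the matter is a one-step estimate: conditionally on $\mathcal F_{n-1}$, bound $\E\big[\|(M_{n-1}-M_0)+D_n\|^p\mid \mathcal F_{n-1}\big]$ in terms of $\|M_{n-1}-M_0\|^p$ and $\E[\|D_n\|^p\mid\mathcal F_{n-1}]$.

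\textbf{Key steps.} First I would establish the one-step inequality. Fix a vector $x$ (to play the role of $M_{n-1}-M_0$) measurable with respect to $\mathcal F_{n-1}$, and let $D$ be a mean-zero random vector (given $\mathcal F_{n-1}$). The obstacle is that $\|x+D\|^p$ is not a nice function of $D$; the standard device is to introduce an independent copy $D'$ of $D$ (conditionally on $\mathcal F_{n-1}$) and use Jensen in the form $\|x+D\|^p = \|x + D - \E[D'\mid \ldots]\|^p \le \E\big[\|x+D-D'\|^p\mid\ldots\big]$. Now $D-D'$ is symmetric, so $\E\|x+D-D'\|^p = \tfrac12\E\big(\|x+(D-D')\|^p+\|x-(D-D')\|^p\big)$, and inequality~\eqref{eq:two point} with $y=D-D'$ gives $\le \E\big[\|x\|^p + S_p(X)^p\|D-D'\|^p\big]$. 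Finally $\E\|D-D'\|^p \le 2^{p-1}(\E\|D\|^p+\E\|D'\|^p)=2^p\E\|D\|^p$ by convexity of $t\mapsto t^p$ (for $p\ge 1$) and the fact that $D,D'$ are i.i.d. So, conditionally on $\mathcal F_{n-1}$,
$$
\E\big[\|(M_{n-1}-M_0)+D_n\|^p\bigm|\mathcal F_{n-1}\big]\le \|M_{n-1}-M_0\|^p + 2^p S_p(X)^p\,\E\big[\|D_n\|^p\bigm|\mathcal F_{n-1}\big].
$$
Taking expectations, $\E\|M_n-M_0\|^p\le \E\|M_{n-1}-M_0\|^p + 2^pS_p(X)^p\,\E\|D_n\|^p$, and iterating from $k=n$ down to $1$ (noting $M_0-M_0=0$) yields $\E\|M_n-M_0\|^p\le 2^pS_p(X)^p\sum_{k=0}^{n-1}\E\|D_{k+1}\|^p$.

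\textbf{Sharpening the constant.} This already gives the inequality with constant $2^pS_p(X)^p$, but the claimed constant is $S_p(X)^p/(2^{p-1}-1)$, which is strictly better (for $p=2$ it is $S_2(X)^2$ versus $4S_2(X)^2$). To recover it, I would not bound $\|D-D'\|^p$ by $2^p\|D\|^p$ so wastefully; instead, keep the martingale structure and apply the smoothness inequality in a telescoping fashion that accumulates the geometric factor $\sum_j 2^{-j(p-1)}=\tfrac{2^{p-1}}{2^{p-1}-1}$. Concretely, the better route is to prove by induction on $n$ the sharper one-step bound
$$
\E\big[\|(M_{n-1}-M_0)+D_n\|^p\mid\mathcal F_{n-1}\big]\le \|M_{n-1}-M_0\|^p+ \frac{S_p(X)^p}{2^{p-1}-1}\,\E\big[\|D_n\|^p\mid\mathcal F_{n-1}\big]
$$
is \emph{not} what holds step-by-step; rather the geometric series emerges from iterating the homogeneous version of the two-point inequality. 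The cleanest implementation is the one in~\cite{NPSS06}: apply~\eqref{eq:two point} to $x=M_k-M_0$ and $y=D_{k+1}$ directly (after the symmetrization above is replaced by the observation that $\E[\|M_k-M_0+D_{k+1}\|^p+\|M_k-M_0-D_{k+1}\|^p\mid\mathcal F_k]\ge 2\|M_k-M_0\|^p$ is \emph{not} available, which is exactly why symmetrization with $D'$ is needed), and then bound $\E\|D_{k+1}-D'_{k+1}\|^p$ more carefully using a further conditioning. The main obstacle is precisely this constant optimization: getting from the easy $2^p$ to the stated $1/(2^{p-1}-1)$ requires the iterative/telescoping argument rather than a single application of~\eqref{eq:two point}; I would follow the proof of Theorem 4.2 in~\cite{NPSS06} for the exact bookkeeping. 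The verification that $D'_{k+1}$ can be taken conditionally independent and that all the conditional expectations are legitimate (measurability, integrability — guaranteed since a bounded-length martingale in a Banach space has all the needed moments once $\E\|D_k\|^p<\infty$, and if some $\E\|D_k\|^p=\infty$ the inequality is trivial) is routine and I would dispatch it briefly.
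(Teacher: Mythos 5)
The paper itself does not prove this statement: it is quoted as Pisier's theorem, with the explicit constant referred to Theorem 4.2 of~\cite{NPSS06}, so there is no internal proof to compare against. Judged on its own terms, your argument has a genuine gap: the symmetrization step is correct but only yields the inequality with constant $2^pS_p(X)^p$ in place of the stated $\frac{S_p(X)^p}{2^{p-1}-1}$ (for $p=2$ this is $4S_2(X)^2$ versus $S_2(X)^2$), and your ``sharpening'' paragraph does not supply the missing argument --- it circles through some incorrect assertions and finally defers to the citation. In particular, your parenthetical claim that a direct application of~\eqref{eq:two point} is unavailable is wrong: taking $x=M_k-M_0$, $y=D_{k+1}$ and using conditional Jensen in the form $\E\big[\|x-y\|^p\mid\mathcal F_k\big]\ge\big\|\E[x-y\mid\mathcal F_k]\big\|^p=\|x\|^p$ turns~\eqref{eq:two point} into $\E\big[\|M_{k+1}-M_0\|^p\mid\mathcal F_k\big]\le\|M_k-M_0\|^p+2S_p(X)^p\,\E\big[\|D_{k+1}\|^p\mid\mathcal F_k\big]$, which telescopes to constant $2S_p(X)^p$ with no symmetrization at all --- already better than what your route produces.

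The idea you are missing for the sharp constant is a dyadic iteration of~\eqref{eq:two point} within a single martingale step. Applying~\eqref{eq:two point} with $x=u+2^{-j-1}v$ and $y=2^{-j-1}v$ gives $\|u+2^{-j}v\|^p\le 2\|u+2^{-j-1}v\|^p-\|u\|^p+2^{1-p}S_p(X)^p2^{-jp}\|v\|^p$; iterating from $j=0$ to $m-1$ yields
\[
\|u+v\|^p\le 2^m\big(\|u+2^{-m}v\|^p-\|u\|^p\big)+\|u\|^p+2^{1-p}S_p(X)^p\|v\|^p\sum_{j=0}^{m-1}2^{j(1-p)}.
\]
The geometric series converges to $\frac{2^{p-1}}{2^{p-1}-1}$, producing exactly the constant $\frac{S_p(X)^p}{2^{p-1}-1}$, while $2^m\big(\|u+2^{-m}v\|^p-\|u\|^p\big)$ converges (uniform smoothness gives Fr\'echet differentiability of the norm off the origin) to a \emph{linear} functional of $v$, which vanishes upon taking $\E[\,\cdot\mid\mathcal F_k]$ because $\E[D_{k+1}\mid\mathcal F_k]=0$; a routine domination argument justifies the interchange of limit and expectation. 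This limit-plus-conditional-expectation step is the substance of the theorem and cannot be dispatched as ``bookkeeping.'' Without it you only have the weaker constant --- which, to be fair, would suffice for every application made of the theorem in this paper, since only the order of magnitude of $C_p(X)$ matters there.
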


\begin{proof}[Proof of Theorem~\ref{thm:equimarkov}] By assumption $f(x)\in Z^1(G,\pi)$ for some homomorphism
$\pi:G\to \Isom(X)$. Let $\{\sigma_k\}_{k=1}^\infty$ be i.i.d.
random variables uniformly distributed over $S$. Then for $t\ge 1$
$W_t$ has the same distribution as the random product
$\sigma_1\cdots\sigma_t$.

For every $t\ge 1$ the following identity holds true:
\begin{eqnarray}\label{eq:main identity}
2f\left(W_t\right)=\sum_{j=1}^t
\pi\left(W_{j-1}\right)f\left(\sigma_j\right)-\sum_{j=1}^t
\pi\left(W_j\right)f\left(\sigma_{j}^{-1}\right).
\end{eqnarray}
We shall prove~\eqref{eq:main identity} by induction on $t$. Note
that  every $x\in G$ satisfies $0=f(e)=f\left(x^{-1}\cdot
x\right)=\pi(x)^{-1}f(x)+f\left(x^{-1}\right)$, i.e.
$f(x)=-\pi(x)f\left(x^{-1}\right)$. This implies~\eqref{eq:main
identity} when $t=1$. Hence, assuming the validity of~\eqref{eq:main
identity} for $t$ we can use the identity
$2f(xy)=2f(x)+\pi(x)f(y)-\pi(xy)f\left(y^{-1}\right)$ to deduce that
\begin{eqnarray*}
2f\left(W_{t+1}\right)&=&2f(W_t\sigma_{t+1})\\&=& 2f(W_t)+
\pi(W_t)f(\sigma_{t+1})-\pi(W_{t+1})f\left(\sigma_{t+1}^{-1}\right)\\&=&\sum_{j=1}^t
\pi\left(W_{j-1}\right)f\left(\sigma_j\right)-\sum_{j=1}^t
\pi\left(W_j\right)f\left(\sigma_{j}^{-1}\right)+\pi(W_t)f(\sigma_{t+1})-\pi(W_{t+1})f\left(\sigma_{t+1}^{-1}\right)
\\&=&\sum_{j=1}^{t+1}
\pi\left(W_{j-1}\right)f\left(\sigma_j\right)-\sum_{j=1}^{t+1}
\pi\left(W_j\right)f\left(\sigma_{j}^{-1}\right),
\end{eqnarray*}
proving~\eqref{eq:main identity}.

Define
$$
M_t\coloneqq \sum_{j=1}^t
\pi\left(W_{j-1}\right)\left(f\left(\sigma_j\right)-v\right)=\sum_{j=1}^t
\pi\left(\sigma_1\cdots\sigma_{j-1}\right)\left(f\left(\sigma_j\right)-v\right),
$$
and $$ N_t\coloneqq \sum_{j=1}^t
\pi\left(W_t^{-1}W_j\right)\left(f\left(\sigma_{j}^{-1}\right)-v\right)=\sum_{j=1}^t
\pi\left(\sigma_t^{-1}\cdots\sigma_{j+1}^{-1}\right)\left(f\left(\sigma_{j}^{-1}\right)-v\right),
$$
where $v\coloneqq \E \left[f(W_1)\right]\in X$. Note that since $S$
is symmetric, $\sigma_j^{-1}$ has the same distribution as
$\sigma_j$, and therefore $N_t$ has the same distribution as $M_t$.
Moreover, \eqref{eq:main identity} implies that $
2f\left(W_t\right)=M_t-\pi(W_t)N_t-v+\pi(W_t)v$. Since $\pi(W_t)$ is
an isometry, we deduce that
\begin{multline}\label{eq:convexity bound}
2^p\E\left[\|f\left(W_t\right)\|^p\right]\le 4^{p-1}
\E\left[\|M_t\|^p\right]+4^{p-1}\E\left[\|N_t\|^p\right]+2\cdot
4^{p-1} \|v\|^p\\= 2\cdot 4^{p-1} \E\left[\|M_t\|^p\right]+2\cdot
4^{p-1} \left\|\E \left[f(W_1)\right]\right\|^p\le 2\cdot 4^{p-1}
\E\left[\|M_t\|^p\right]+2\cdot 4^{p-1} \E\left[\left\|
f(W_1)\right\|^p\right].
\end{multline}
Note that for every $t\ge 1$,
\begin{multline*} \E\left[M_t\big|\,
\sigma_0,\ldots,\sigma_{t-1}\right]=\E\left[\sum_{j=1}^t
\pi\left(\sigma_0\cdots\sigma_{j-1}\right)\left(f\left(\sigma_j\right)-v\right)\Big|\,
\sigma_0,\ldots,\sigma_{t-1}\right]\\=M_{t-1}+\pi\left(\sigma_0\cdots\sigma_{t-1}\right)
\left(\E\left[f\left(\sigma_j\right)\right]-v\right)=M_{t-1},
\end{multline*}
Hence $\{M_k\}_{k=0}^\infty$ is a martingale with respect to the
filtration induced by $\{\sigma_k\}_{k=0}^\infty$. By
theorem~\ref{thm:martingale},
\begin{multline}\label{eq:use pisier}
\E\left[\|M_t\|^p\right]\le
\frac{S_p(X)^p}{2^{p-1}-1}\cdot\sum_{k=0}^{t-1}\E
\left[\|M_{k+1}-M_k\|^p\right]=\sum_{k=0}^{t-1}\E
\left[\|f(\sigma_k)-v\|^p\right]\\\le
\frac{S_p(X)^p}{2^{p-1}-1}\cdot t2^{p-1}\left(\E\left[\left\|
f(W_1)\right\|^p\right]+\|v\|^p\right)\le
\frac{2^pS_p(X)^p}{2^{p-1}-1}\cdot t\E\left[\left\|
f(W_1)\right\|^p\right].
\end{multline}
Combining~\eqref{eq:convexity bound} and~\eqref{eq:use pisier}
completes the proof of Theorem~\ref{thm:equimarkov}.
\end{proof}

\begin{remark}\label{rem:hilbert case} {\em When the target space $X$ is Hilbert space
one can prove Theorem~\ref{thm:equialpha} via the following simpler
argument. Using the notation in the proof of
Theorem~\ref{thm:equimarkov} we see that for each $t\in \N$ the
random variables $W_t^{-1}=\sigma_t^{-1}\cdots \sigma_1^{-1}$ and
$W_t^{-1}W_{2t}=\sigma_{t+1}\cdots\sigma_{2t}$ are independent and
have the same distribution as $W_t$. Therefore $Y_1\coloneqq
f\left(W_t^{-1}\right)$ and $Y_2\coloneqq
f\left(W_t^{-1}W_{2t}\right)=\pi\left(W_t^{-1}\right)f(W_{2t})+f\left(W_t^{-1}\right)$
are i.i.d., and hence satisfy
\begin{multline*}
\E\left[\left\|f\left(W_{2t}\right)\right\|^2\right]=\E\left[\left\|\pi\left(W_t^{-1}\right)f\left(W_{2t}\right)\right\|^2\right]=\E\left[\|Y_1-Y_2\|^2\right]=
\E\left[\|Y_1\|^2-2\langle
Y_1,Y_2\rangle+\|Y_2\|^2\right]\\=2\E\left[\|f(W_t)\|^2\right]-2\left\|\E\left[f(W_t)\right]\right\|^2\le
2\E\left[\|f(W_t)\|^2\right].
\end{multline*}
By induction it follows that for every $k\in \N$,
$$
\E\left[\left\|f\left(W_{2^k}\right)\right\|^2\right]\le 2^k
\E\left[\|f(W_1)\|^2\right].
$$
This implies Theorem~\ref{thm:equialpha}, and hence also the
Guentner-Kaminker theorem~\cite{GK04}, by arguing exactly as in the
conclusion of the proof of Theorem~\ref{thm:equialpha}.
 }\fin
\end{remark}

\section{The behavior of $L_p$ compression under wreath
products}\label{sec:wreath}

Given two groups $G, H$ let $\mathscr{L}_G(H)$ denote the wreath
product $G\bwr H$ where the set of generators of $G$ is taken to be
$G\setminus\{e\}$ (i.e. any two distinct elements of $G$ are at
distance $1$ from each other). With this definition it is immediate
to check (see for example the proof of Lemma 2.1 in~\cite{ANV07})
that
\begin{eqnarray}\label{eq:the metric}(f,i),(g,j)\in \mathscr{L}_G(\Z)\implies
d_{\mathscr{L}_G(\Z)}\big((f,i),(g,j)\big)\approx
|i-j|+\max\big\{|k|+1:\ f(k)\neq g(k)\big\}.
\end{eqnarray}
The case $G=C_2$ corresponds to the classical lamplighter group on
$H$.

\begin{lem}\label{lem:lamp} For every group $G$ we have
$\alpha^*\big(\mathscr{L}_G(\Z)\big)=1$.
\end{lem}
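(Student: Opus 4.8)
The plan is to construct an explicit embedding of $\mathscr{L}_G(\Z)$ into a Hilbert space (or into $L_1$, then invoke that $L_1$ embeds in $L_2$ with the relevant exponent unchanged for our purposes) whose compression is asymptotically linear, matching the trivial upper bound $\alpha^*\le 1$. The key observation is that the metric formula \eqref{eq:the metric} shows that in $\mathscr{L}_G(\Z)$ the ``lamp'' coordinate $f$ contributes to the distance only through the \emph{range} of positions $k$ where $f(k)\neq e_G$, not through how many such positions there are, nor through which group elements appear. Since any two distinct elements of $G$ are at distance exactly $1$ in this nonstandard generating set, the $G$-part is metrically trivial, and the whole difficulty is encoded in a one-dimensional ``interval'' structure on $\Z$ together with the base $\Z$-coordinate.

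First I would record the metric estimate \eqref{eq:the metric} and reduce to building, for each pair $(f,i)$, a point in a Hilbert space. The natural guess: send $(f,i)$ to a combination of (a) an embedding of the base coordinate $i\in\Z$ into $\R$ (which has compression $1$), and (b) for the lamp configuration, something like the indicator (in $\ell_2$ of an appropriate index set) of the discrete interval $\big[\min\{k:f(k)\neq e\},\,\max\{k:f(k)\neq e\}\big]$, or a Schreier-graph / tree-like encoding of which sites are lit. The point is that the symmetric difference of two such interval-indicators has size comparable to the distance between the corresponding ``extremal lit sites,'' which by \eqref{eq:the metric} is exactly what governs $d_{\mathscr{L}_G(\Z)}$. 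One must be a little careful that distinct $f,g$ with the same lit range but different values still need to be separated — but that costs only an $O(1)$ additive term (embed $G\setminus\{e\}$ into a Hilbert space with all nonzero elements at mutual distance $\approx 1$, e.g.\ an orthonormal-type configuration), which does not affect the exponent.

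Concretely I would aim to show that the map can be taken Lipschitz with a lower bound $\|F(f,i)-F(g,j)\|\gtrsim d_{\mathscr{L}_G(\Z)}\big((f,i),(g,j)\big)^{1}$, i.e.\ compression exponent $1$; combined with the trivial bound $\alpha^*\le 1$ this gives the lemma. The cleanest route is probably: write $d\approx |i-j| + L$ where $L=\max\{|k|+1: f(k)\neq g(k)\}$, embed $i\mapsto i\in\R$ with its natural (bounded-degree, hence $\alpha=1$) embedding, and for the lamp part use that the metric $\rho\big((f,i),(g,j)\big):=L$ is essentially the ``path metric on $\Z$ through the origin'' which embeds isometrically-up-to-constants into $\ell_1\subseteq L_1$ via interval indicators; then replace $L_1$ by $L_2$ using the Gaussian-Hilbert / snowflake-free argument or simply that for \emph{this} particular $\ell_1$-valued map the target can be taken Hilbertian since interval indicators embed into $\ell_2$ with distortion matching symmetric-difference size up to constants on nested-interval families (and general pairs reduce to nested ones up to constants). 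Summing the two coordinates in $\ell_2$-direct-sum gives a map with $\|F(x)-F(y)\|\approx d(x,y)$.

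The main obstacle I anticipate is handling the lamp coordinate cleanly: the set $\{k:f(k)\neq g(k)\}$ is symmetric-difference–like, so interval indicators of $\{k : f(k)\neq e\}$ need not interact additively, and one must verify that $\big\|\mathbf 1_{I_f}-\mathbf 1_{I_g}\big\|$ (for $I_f$ the spanned interval of lit sites of $f$) is comparable to $\max\{|k|+1 : f(k)\neq g(k)\}$ — this is true because the origin-anchored nature of the metric means the relevant interval always contains $0$ or is controlled by the farther endpoint, so the two spanned intervals are ``nested around $0$'' up to a factor of $2$. Once that comparison is in hand, everything else is routine: Lipschitz-ness is clear from \eqref{eq:the metric}, the lower bound follows by adding the base and lamp contributions, and the exponent $1$ is forced to be optimal. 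If a direct Hilbertian construction proves awkward, a safe fallback is to first embed into $L_1$ with distortion $O(1)$ (so compression exponent $1$) and then note this already yields $\alpha_1^*\big(\mathscr{L}_G(\Z)\big)=1$, and separately observe that the interval-indicator part, being a sum of $\{0,1\}$-valued functions on a line, also embeds into $L_2$ with the needed bi-Lipschitz-up-to-additive-$O(1)$ behavior, giving $\alpha^*=1$.
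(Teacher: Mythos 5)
Your construction encodes the lamp configuration only through the spanned interval $I_f$ (or, in the alternative reading, through the set of lit sites), and you dismiss the dependence on the \emph{values} $f(k)\in G$ as ``only an $O(1)$ additive term.'' That is the error: by \eqref{eq:the metric} the distance depends on $\max\{|k|+1:\ f(k)\neq g(k)\}$, i.e.\ on the farthest site where the two configurations \emph{disagree}, not on where they disagree with $e_G$. Take $f$ and $g$ that both light exactly the site $k=N$ (so $I_f=I_g$ and the lit sets coincide) but put different group elements there, $f(N)\neq g(N)$. Then $d_{\mathscr{L}_G(\Z)}\big((f,0),(g,0)\big)\approx N$, yet your interval indicators are equal and your ``orthonormal separating perturbation'' contributes only $O(1)$, so the lower bound $\|F(f,i)-F(g,j)\|\gtrsim d(\cdot,\cdot)$ fails by a factor of $N$. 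This is not a boundary case that can be patched with an additive constant; it is precisely the part of the metric that carries the information you need. (A secondary issue: you aim for a bi-Lipschitz embedding into $L_2$, which is stronger than $\alpha^*=1$ and should not be expected to hold here; the lemma only needs compression exponent $\alpha$ for every $\alpha<1$. Your $L_1$ fallback also does not help: an $L_1$ embedding with $O(1)$ distortion gives $\alpha_1^*=1$, but the snowflake route $(L_1,\|\cdot\|_1^{1/2})\hookrightarrow L_2$ only yields $\alpha^*\ge\tfrac12$, not $\alpha^*=1$.)

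The paper's proof sidesteps this entirely with a probabilistic reduction to the known case $C_2\bwr\Z$: pick i.i.d.\ Bernoulli labels $\{\e_z\}_{z\in G}$, replace $f$ by the random $C_2$-valued configuration $\e_f(k)=\e_{f(k)}$, and compose with a near-optimal embedding $\theta$ of $C_2\bwr\Z$ into $L_2$ (from Tessera, or the explicit one constructed later in the paper). The crucial point is that whenever $f(k)\neq g(k)$, the labels $\e_{f(k)},\e_{g(k)}$ differ with probability exactly $\tfrac12$, so the expected $C_2\bwr\Z$ distance of $(\e_f,i)$ and $(\e_g,j)$ is comparable to $d_{\mathscr{L}_G(\Z)}\big((f,i),(g,j)\big)$. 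This is the mechanism that captures value differences at distant sites, and it is exactly what your interval-indicator encoding is missing.
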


\begin{proof}  As shown by Tessera in~\cite{Tess06}, $\alpha^*(C_2\bwr
\Z)=1$ (we provide an alternative explicit embedding exhibiting this
fact in Section~\ref{sec:embed} below). Therefore for every
$\alpha\in (0,1)$ there is a mapping $\theta: C_2\bwr \Z\to L_2$
satisfying
\begin{eqnarray}\label{eq:satisfying}
(x,i),(y,j)\in C_2\bwr \Z\implies d_{C_2\bwr
\Z}\big((x,i),(y,j)\big)^{\alpha}\lesssim\|\theta(x,i)-\theta(y,j)\|_2\lesssim
d_{C_2\bwr \Z}\big((x,i),(y,j)\big).
\end{eqnarray}

Let $\{\e_z\}_{z\in G}$ be i.i.d. $\{0,1\}$ valued Bernoulli random
variables, defined on some probability space $(\Omega,\Pr)$. For
every $f:\Z\to G$ define a random mapping $\e_f: \Z\to C_2$ by
$\e_f(k)=\e_{f(k)}$. We now define an embedding
$F:\mathscr{L}_G(\Z)\to L_2(\Omega,L_2)$ by
$$
F(f,i)\coloneqq \theta(\e_f,i).
$$
Fix $(f,i), (g,j)\in \mathscr{L}_G(\Z)$ and let $k_{\max}\in \Z$
satisfy $f(k_{\max})\neq g(k_{\max})$ and $|k_{\max}|=
\max\big\{|k|:\ f(k)\neq g(k)\big\}$. Then
\begin{multline*}
\|F(f,i)-F(g,j)\|_{L_2(\Omega,L_2)}^2=\E\left[\|\theta(\e_f,i)-\theta(\e_g,j)\|_2^2\right]\stackrel{\eqref{eq:satisfying}}{\lesssim}
\E \left[d_{C_2\bwr \Z}\big((\e_f,i),(\e_g,j)\big)^2\right]
\\\stackrel{\eqref{eq:the
metric}}{\approx}\E \left[\left(|i-j|+\max\big\{|k|+1:\
\e_{f(k)}\neq \e_{g(k)}\big\}\right)^2\right]\le
\left[\left(|i-j|+|k_{\max}|+1\right)^2\right]\stackrel{\eqref{eq:the
metric}}{\approx} d_{\mathscr{L}_G(\Z)}\big((f,i),(g,j)\big)^2.
\end{multline*}
In the reverse direction note that since $f(k_{\max})\neq
g(k_{\max})$ with probability $\frac12$ we have
$\e_{f(k_{\max})}\neq \e_{g(k_{\max})}$. Therefore
\begin{multline*}
\|F(f,i)-F(g,j)\|_{L_2(\Omega,L_2)}^2=
\E\left[\|\theta(\e_f,i)-\theta(\e_g,j)\|_2^2\right]\stackrel{\eqref{eq:satisfying}}{\gtrsim}
\E \left[d_{C_2\bwr
\Z}\big((\e_f,i),(\e_g,j)\big)^{2\alpha}\right]\\\stackrel{\eqref{eq:the
metric}}{\approx}\E \left[\left(|i-j|+\max\big\{|k|+1:\
\e_{f(k)}\neq \e_{g(k)}\big\}\right)^{2\alpha}\right]\gtrsim
\left[\left(|i-j|+|k_{\max}|+1\right)^{2\alpha}\right]\stackrel{\eqref{eq:the
metric}}{\approx}
d_{\mathscr{L}_G(\Z)}\big((f,i),(g,j)\big)^{2\alpha}.
\end{multline*}
This completes the proof of Lemma~\ref{lem:lamp}. \end{proof}

\begin{remark}\label{rem:growth} {\em In~\cite{Tess06} Tessera shows that
if $H$ has volume growth of order $d$ then
\begin{eqnarray}\label{eq:d case}
\alpha^*\big(\mathscr{L}_G(H)\big)\ge \frac{1}{d}.
\end{eqnarray}
Note that Tessera makes this assertion for $\mathscr{L}_F(H)$, where
$F$ is finite (see Section 5.1 in~\cite{Tess06}, and specifically
Remark 5.2 there). But, it is immediate from the proof
in~\cite{Tess06} that the constant factors in Tessera's embedding do
not depend on the cardinality of $F$, and therefore~\eqref{eq:d
case} holds in full generality. Observe that~\eqref{eq:d case} is a
generalization of Lemma~\ref{lem:lamp}, but we believe that the
argument in Lemma~\ref{lem:lamp} which reduces the problem to the
case $G=C_2$ is of independent interest.

The case $H=\Z^2$ in~\eqref{eq:d case} can be proved via the
following explicit embedding. For simplicity we describe it when
$G=C_2$. Fix $0<\alpha<\frac12$ and let
$$
\left\{v_{y,r,g}:\ y\in \Z^2, \ r\in \N\cup\{0\},\ g:y+[-r,r]^2\to
\{0,1\}, \ g\not \equiv 0\right\}
$$
be an orthonormal system of vectors in $L_2$. For simplicity we also
write $v_{y,r,0}=0$. define $\psi:C_2\bwr \Z^2\to \R^2\oplus L_2$ by
$$
\psi(f,x)= x\oplus \left(\sum_{y\in \Z^2\setminus
\{x\}}\sum_{r=0}^\infty
\frac{\max\{1-2r/\|x-y\|_\infty,0\}}{\|x-y\|_\infty^{\frac32-2\alpha}}v_{y,r,f\upharpoonright_{y+[-r,r]^2}}\right).
$$
An elementary (though a little tedious) case analysis shows that
$\psi$ is Lipschitz and has compression $\alpha$.
 \fin}
\end{remark}

The following theorem, in combination with Lemma~\ref{lem:lamp},
contains Theorem~\ref{thm:hilbert} as a special case (note
that~\eqref{eq:second case} follows from~\eqref{eq:Lp second} since
clearly $\alpha^*(G\bwr H)\le \alpha^*(G)$).

\begin{thm}\label{thm:Lp}
Let $G, H$ be groups and $p\ge 1$. Then
$$
\min\left\{\alpha_p^*(G),\alpha_p^*\big(\mathscr{L}_G(H)\big)\right\}\ge
\frac{1}{p}\implies \alpha_p^*(G\bwr H)\ge
\frac{p\alpha_p^*(G)\alpha_p^*(\mathscr{L}_G(H))}{p\alpha_p^*(G)+p\alpha_p^*\big(\mathscr{L}_G(H)\big)-1},
$$
and
\begin{eqnarray}\label{eq:Lp second}
\min\left\{\alpha_p^*(G),\alpha_p^*\big(\mathscr{L}_G(H)\big)\right\}\le
\frac{1}{p}\implies \alpha_p^*(G\bwr H)\ge
\min\left\{\alpha_p^*(G),\alpha_p^*\big(\mathscr{L}_G(H)\big)\right\}.
\end{eqnarray}
\end{thm}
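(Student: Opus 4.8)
The plan is to construct, for any $\alpha_1 < \alpha_p^*(G)$ and $\alpha_2 < \alpha_p^*(\mathscr{L}_G(H))$, a Lipschitz embedding $\Phi : G\bwr H \to L_p$ whose compression exponent is the claimed quantity, and then optimize over $\alpha_1,\alpha_2$. The key structural observation is that an element $(f,x)\in G\bwr H$ carries two kinds of data: the ``lamp configuration'' $f:H\to G$ together with the cursor $x$, which together live naturally inside $\mathscr{L}_G(H)$ (where each lamp group $G$ is given the trivial metric), and the actual value $f(z)\in G$ at each occupied site $z\in\supp(f)$, whose cost in the word metric of $G\bwr H$ is measured using $d_G$ rather than the truncated metric. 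So I would fix embeddings $\theta : \mathscr{L}_G(H)\to L_p$ with $d_{\mathscr{L}_G(H)}(\cdot,\cdot)^{\alpha_2}\lesssim \|\theta(\cdot)-\theta(\cdot)\|_p\lesssim d_{\mathscr{L}_G(H)}(\cdot,\cdot)$ and $\varphi : G\to L_p$ with $d_G(\cdot,\cdot)^{\alpha_1}\lesssim\|\varphi(\cdot)-\varphi(\cdot)\|_p\lesssim d_G(\cdot,\cdot)$ (and WLOG $\varphi(e_G)=0$), and build $\Phi$ out of these two ingredients with suitable weights.

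First I would recall the standard description of the word metric on $G\bwr H$: up to constants, $d_{G\bwr H}((f,x),(g,y))$ is comparable to $d_H(x,y)$ plus the length of a travelling-salesman-type tour of $H$ visiting $\supp(f)\cup\supp(g)$ (starting at $x$, ending at $y$), plus $\sum_{z} d_G(f(z),g(z))$. When $H$ has a geodesic structure one simplifies the tour term; in the generality of arbitrary $H$ one keeps it as a tour length $\mathrm{tsp}$. The second step is the definition of $\Phi$: set $\Phi(f,x) = A\cdot\theta(\bar f,x) \oplus \big(\bigoplus_{z\in H} B\cdot \varphi(f(z))\big)$, viewed as an element of $L_p\oplus \ell_p(H;L_p)$, where $\bar f$ denotes the image of $f$ in $\mathscr{L}_G(H)$ (forgetting the $G$-metric), and $A,B>0$ are scaling parameters to be chosen. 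The third step is the upper (Lipschitz) bound: for adjacent generators of $G\bwr H$ one checks that each of the two blocks changes by $O(1)$, using that $\theta$ is Lipschitz on $\mathscr{L}_G(H)$ (a cursor move changes $\bar f,x$ by an edge; flipping a lamp at the cursor changes $\bar f$ by distance $O(1)$ in $\mathscr{L}_G(H)$ and changes one coordinate $\varphi(f(x))$ by a bounded amount since $\varphi$ is Lipschitz on $G$ and the generators of $G\bwr H$ include only the generators of $G$ at the origin). The fourth step, and the heart of the matter, is the lower bound: given $(f,x),(g,y)$, write $D=d_{G\bwr H}((f,x),(g,y))$ and split into the regime where the ``configuration cost'' $d_H(x,y)+\mathrm{tsp}$ dominates versus the regime where $\sum_z d_G(f(z),g(z))$ dominates. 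In the first regime, $d_{\mathscr{L}_G(H)}(\bar f,\bar g)\gtrsim$ that cost, so the $\theta$-block gives $\gtrsim A\cdot(\text{config cost})^{\alpha_2}$; in the second regime one needs $\big(\sum_z d_G(f(z),g(z))^p\big)^{1/p}$ controlled from below in terms of $\sum_z d_G(f(z),g(z))$, but the number of sites where $f$ and $g$ differ is itself at most the configuration cost, so by Hölder, $\sum_z d_G(f(z),g(z))^p \geq (\text{config cost})^{1-p}\big(\sum_z d_G(f(z),g(z))\big)^p$, hence the $\varphi$-block gives $\gtrsim B\cdot(\text{config cost})^{(1-p)/p\cdot\ldots}$ — this is exactly where the $\alpha_1\alpha_2$ interaction and the $-1$ in the denominator enter.

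The main obstacle I anticipate is the balancing of the two regimes to extract the exact exponent. Writing $R$ for the configuration cost and $L=\sum_z d_G(f(z),g(z))$, so $D\approx R+L$ and the number of differing lamps is $\lesssim R$, one has $\|\Phi(f,x)-\Phi(g,y)\|_p \gtrsim \max\{A R^{\alpha_2},\, B L^{\alpha_1} R^{(\alpha_1 p - 1)/p}\}$ (the second via Hölder as above, valid since $\alpha_1 p\geq 1$), and one must choose the relative sizes of $A$ and $B$ as functions of the scale so that this max is always $\gtrsim D^{\alpha}$ with $\alpha = \frac{p\alpha_1\alpha_2}{p\alpha_1+p\alpha_2-1}$ — equivalently, a weighted-geometric-mean / interpolation argument: raise the first term to a power $\lambda$ and the second to $1-\lambda$, multiply, and check that the $R$-exponents cancel for the right $\lambda$, leaving $D^{\alpha}$ after using $R,L\le D$ and $R+L\gtrsim D$. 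Making this clean requires care with the case $L=0$ (pure configuration change, handled by the first term alone) and with the fact that the lamp-count bound $|\{z: f(z)\neq g(z)\}|\lesssim R$ holds because visiting each such site is forced in any tour realizing the metric. For the second assertion \eqref{eq:Lp second}, when $\min\{\alpha_p^*(G),\alpha_p^*(\mathscr{L}_G(H))\}\le \frac1p$ the Hölder step is no longer available (one cannot lose a factor $R^{(\alpha_1 p-1)/p}$ profitably), so instead one takes $B$ very small relative to $A$ and uses only the $\theta$-block together with the crude bound $\|\varphi(f(z))-\varphi(g(z))\|_p\gtrsim d_G(f(z),g(z))^{\alpha_1}$ at a single worst site, recovering compression $\min\{\alpha_1,\alpha_2\}$; combined with the trivial $\alpha_p^*(G\bwr H)\le\alpha_p^*(G)$ and the containment $\mathscr{L}_G(H)\hookrightarrow$ a bounded-distance-distorted quotient situation, this yields \eqref{eq:Lp second}. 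Finally, taking $\alpha_1\uparrow\alpha_p^*(G)$ and $\alpha_2\uparrow\alpha_p^*(\mathscr{L}_G(H))$ gives the stated bounds, and Theorem~\ref{thm:hilbert} follows by specializing $p=2$, $H=\Z$ and invoking Lemma~\ref{lem:lamp} to get $\alpha_2^*(\mathscr{L}_G(\Z))=1$.
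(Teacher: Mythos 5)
Your construction and the argument for the first assertion match the paper's: the paper sets $F(f,x)\coloneqq \phi(f,x)\oplus(\psi\circ f)\in L_p\oplus\ell_p(H,L_p)$ (your $\Phi$ with the rescaling parameters $A,B$ absorbed into the implicit constants), uses the decomposition $d_{G\bwr H}((f,x),(g,y))\approx d_{\mathscr{L}_G(H)}+d_{\ell_1(H,G)}$ together with the bound $\big|\{z:f(z)\neq g(z)\}\big|\le d_{\mathscr{L}_G(H)}$, and then applies H\"older and the elementary two-case inequality $m^{bp}+n^{ap}/m^{ap-1}\ge n^{abp^2/(ap+bp-1)}$. Your ``weighted geometric mean / interpolation'' sketch is exactly this computation.

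However, your treatment of the second assertion \eqref{eq:Lp second} has a genuine gap. You propose to extract only the value at a single worst site $z_0$, giving $\|\Phi\|_p\gtrsim\max\{R^{\alpha_2},\;d_G(f(z_0),g(z_0))^{\alpha_1}\}$ with $R$ the configuration cost and $L=\sum_z d_G(f(z),g(z))$. But $d_G(f(z_0),g(z_0))$ can be as small as $L/R$ (the lamps that differ number at most $R$, and their $G$-distances could be nearly equal). Take $L\approx R^2$, so $D\approx L$. Then your lower bound is $\max\{R^{\alpha_2},R^{\alpha_1}\}=R^{\max\{\alpha_1,\alpha_2\}}$, whereas you need $D^{\min\{\alpha_1,\alpha_2\}}\approx R^{2\min\{\alpha_1,\alpha_2\}}$; this fails whenever $\max\{\alpha_1,\alpha_2\}<2\min\{\alpha_1,\alpha_2\}$, e.g.\ for $\alpha_1=\alpha_2$. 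The correct argument is what the paper does: when $\alpha_1 p\le 1$, use \emph{subadditivity} of $t\mapsto t^{\alpha_1 p}$ to get $\sum_z d_G(f(z),g(z))^{\alpha_1 p}\ge\big(\sum_z d_G(f(z),g(z))\big)^{\alpha_1 p}=L^{\alpha_1 p}$, so the $\varphi$-block alone contributes $\gtrsim L^{\alpha_1}$, and then $\max\{R^{\alpha_2},L^{\alpha_1}\}\gtrsim(R+L)^{\min\{\alpha_1,\alpha_2\}}$ is immediate. And when $\alpha_1 p>1$ but $\alpha_2 p\le 1$, no new idea is needed: your H\"older computation already gives $\min\{\alpha_2,\;p\alpha_1\alpha_2/(p\alpha_1+p\alpha_2-1)\}$, and one checks that $p\alpha_2\le 1$ forces $\alpha_2\le p\alpha_1\alpha_2/(p\alpha_1+p\alpha_2-1)$, so the minimum is $\alpha_2=\min\{\alpha_1,\alpha_2\}$. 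Also, the trivial inequality $\alpha_p^*(G\bwr H)\le\alpha_p^*(G)$ that you invoke is irrelevant here, as you are proving a lower bound; in the paper it is only used afterwards to upgrade \eqref{eq:Lp second} to the equality \eqref{eq:second case}.
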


\begin{proof} We shall start with some useful preliminary observations. Let
$(X,d_X)$ be a metric space, $p\ge 1$, and let $\Omega$ be a set. We
denote by $\ell_p(\Omega,X)$ the metric space of all finitely
supported functions $f:\Omega\to X$, equipped with the metric
$$
d_{\ell_p(\Omega,X)}(f,g)\coloneqq \left(\sum_{\omega\in \Omega}
d_X\big(f(\omega),g(\omega)\big)^p\right)^{1/p}.
$$
It is immediate to verify that for every $(f,x),(g,y)\in G\bwr H$ we
have
\begin{eqnarray}\label{eq:L1connection}
d_{G\bwr H}\big((f,x),(g,y)\big)\approx
d_{\mathscr{L}_G(H)}\big((f,x),(g,y)\big)+ d_{\ell_1(H,G)}(f,g).
\end{eqnarray}
Indeed, it suffices to verify the
equivalence~\eqref{eq:L1connection} when $(g,y)$ is the identity
element $(e,e)$ of $G\bwr H$. In this case~\eqref{eq:L1connection}
simply says that in order to move from $(e,e)$ to $(f,x)$ one needs
to visit the locations $z\in H$ where $f(z)\neq e$, and in each of
these locations one must move within $G$ from $e$ to the appropriate
group element $f(z)\in G$.

Another basic fact that we will use is that for every
$(f,x),(g,y)\in G\bwr H$,
\begin{eqnarray}\label{eq:support size}
\big|\{z\in H:\ f(z)\neq g(z)\}\big|\le
d_{\mathscr{L}_G(H)}\big((f,x),(g,y)\big).
\end{eqnarray}
Once more, this fact is entirely obvious: in order to move in
$\mathscr{L}_G(H)$ from $(f,x)$ to $(g,y)$ once must visit all the
locations where $f$ and $g$ differ.

We shall now proceed to the proof of Theorem~\ref{thm:Lp}. Fix
$a<\alpha_p^*(G)$ and $b<\alpha_p^*\big(\mathscr{L}_G(H)\big)$. Then
there exists a function $\psi:G\to L_p$  such that
\begin{eqnarray}\label{eq:psi}
u,v\in G\implies d_G(u,v)^{a}\lesssim\|\psi(u)-\psi(v)\|_p\lesssim
d_G(u,v).
\end{eqnarray}
We also know that there exists a function $\phi:\mathscr{L}_G(H)\to
L_p$ which satisfies
\begin{eqnarray}\label{eq:phi}
u,v\in \mathscr{L}_G(H)\implies
d_{\mathscr{L}_G(H)}(u,v)^{b}\lesssim\|\phi(u)-\phi(v)\|_p\lesssim
d_{\mathscr{L}_G(H)}(u,v).
\end{eqnarray}

Define a function $F:G\bwr H\to L_p\oplus \ell_p(H,L_p)$ by
$$
F(f,x)\coloneqq \phi(f,x)\oplus \left(\psi\circ f\right).
$$
Fix $(f,x),(g,y)\in G\bwr H$ and denote $m\coloneqq
d_{\mathscr{L}_G(H)}\big((f,x),(g,y)\big)$ and $n\coloneqq
d_{\ell_1(H,G)}(f,g)$. We know from~\eqref{eq:L1connection} that
$d_{G\bwr H}\big((f,x),(g,y)\big)\approx m+n$. Now,
\begin{multline*}
\|F(f,x)-F(g,y)\|_p=\left(\|\phi(f,x)-\phi(g,y)\|_p^p+\sum_{z\in
H}\|\psi(f(z))-\psi(g(z))\|_p^p\right)^{1/p}
\\\le\|\phi(f,x)-\phi(g,y)\|_p+\sum_{z\in
H}\|\psi(f(z))-\psi(g(z))\|_p\stackrel{\eqref{eq:psi}\wedge\eqref{eq:phi}}{\lesssim}
m+n\approx d_{G\bwr H}\big((f,x),(g,y)\big).
\end{multline*}

In the reverse direction we have the lower bound
\begin{eqnarray}\label{eq:step1}
\|F(f,x)-F(g,y)\|_p\stackrel{\eqref{eq:psi}\wedge\eqref{eq:phi}}{\gtrsim}\left(
m^{bp}+\sum_{z\in H}d_{G}(f(z),g(z))^{ap}\right)^{1/p}.
\end{eqnarray}
If $ap\le 1$ then $\sum_{z\in H}d_{G}(f(z),g(z))^{ap}\ge
\left(\sum_{z\in H}d_{G}(f(z),g(z))\right)^{ap}=n^{ap}$
and~\eqref{eq:step1} implies that
\begin{eqnarray}\label{eq:ap<1}
\|F(f,x)-F(g,y)\|_p\gtrsim \left(m^{bp}+n^{ap}\right)^{1/p}\gtrsim
(m+n)^{\min\{a,b\}}\gtrsim d_{G\bwr
H}\big((f,x),(g,y)\big)^{\min\{a,b\}}.
\end{eqnarray}

Assume that $ap>1$. It follows from~\eqref{eq:support size} that
$\big|\{z\in H:\ f(z)\neq g(z)\}\big|\le m$. Thus, using H\"older's
inequality,  we see that
\begin{eqnarray}\label{eq:holder}
\sum_{z\in H}d_{G}(f(z),g(z))^{ap}\ge
\frac{1}{m^{ap-1}}\left(\sum_{z\in
H}d_{G}(f(z),g(z))\right)^{ap}=\frac{n^{ap}}{m^{ap-1}}.
\end{eqnarray}
Note that $m^{bp}+\frac{n^{ap}}{m^{ap-1}}\ge
n^{\frac{abp^2}{ap+bp-1}}$, which follows by considering the cases
$m\ge n^{\frac{ap}{ap+bp-1}}$ and $m\le n^{\frac{ap}{ap+bp-1}}$
separately. Hence,
\begin{multline}\label{eq:second}
\|F(f,x)-F(g,y)\|_p\stackrel{\eqref{eq:step1}\wedge\eqref{eq:holder}}{\gtrsim}
\left(m^{bp}+\frac{n^{ap}}{m^{ap-1}}\right)^{1/p}\gtrsim
\max\left\{m^b,n^{\frac{abp}{ap+bp-1}}\right\}\\\gtrsim
\left(m+n\right)^{\min\left\{b,\frac{abp}{ap+bp-1}\right\}}\approx
d_{G\bwr
H}\big((f,x),(g,y)\big)^{\min\left\{b,\frac{abp}{ap+bp-1}\right\}}.
\end{multline}
Note that when $ap>1$, if $b\le \frac{abp}{ap+bp-1}$ then $bp\le 1$.
Therefore~\eqref{eq:ap<1} and~\eqref{eq:second} imply
Theorem~\ref{thm:Lp}. \end{proof}

\begin{remark}\label{rem:Z^2} {\em Theorem~\ref{thm:Lp},
in combination with Remark~\ref{rem:growth} and the results of
Section~\ref{sec:beta} below, imply that if $G$ is amenable and $H$
has quadratic growth then
\begin{eqnarray}\label{eq:to see}
\alpha^*(G\bwr H)=\min\left\{\frac12,\alpha^*(G)\right\}.
\end{eqnarray}
Thus, in particular,
$$
\alpha^*\left(C_2\bwr \Z^2\right)=\alpha^*\left(\Z\bwr
\Z^2\right)=\frac12.
$$
To see~\eqref{eq:to see} note that by Theorem~\ref{thm:ershler} in
Section~\ref{sec:beta} we have $\beta^*(G\bwr H)=1$.
Using~\eqref{eq:ANP} we deduce that $\alpha^*(G\bwr H)\le \frac12$,
and the inequality $\alpha^*(G\bwr H)\le \alpha^*(G)$ is obvious.
The reverse inequality in~\eqref{eq:to see} is a corollary of
Theorem~\ref{thm:Lp} and Remark~\ref{rem:growth}.\fin}
\end{remark}

\section{Embedding the lamplighter group into
$L_1$}\label{sec:embed}

In this section we show that the lamplighter group on the $n$-cycle,
$C_2\bwr C_n$, embeds into $L_1$ with distortion independent of $n$.
This implies via a standard limiting argument that also $C_2\bwr \Z$
embeds bi-Lipschitzly into $L_1$. We present two embeddings of
$C_2\bwr C_n$ into $L_1$. Our first embedding is a variant of the
embedding method used in~\cite{ANV07}. In~\cite{ANV07} there is a
detailed explanation of how such embeddings can be discovered by
looking at the irreducible representations of $C_2\bwr C_n$. The
embedding below can be motivated analogously, and we refer the
interested reader to~\cite{ANV07} for the details. Here we just
present the resulting embedding, which is very simple. Our second
embedding is motivated by direct geometric reasoning rather than the
``dual" point of view in~\cite{ANV07}.

In what follows we slightly abuse the notation by considering
elements $(x,i)\in C_2\bwr C_n$ as an index $i\in C_n$ and a subset
$x\subseteq C_n$. For the sake of simplicity we will denote the
metric on $C_2\bwr C_n$ by $\rho$. The metric $d_{C_n}$ will denote
the canonical metric on the $n$-cycle $C_n$. It is easy to check
(see Lemma 2.1 in~\cite{ANV07}) that
\begin{eqnarray}\label{metric on lamplighter}
(x,j),(y,\ell)\in C_2\bwr C_n\implies \rho\big((x,j),(y,\ell)\big)
\approx d_{C_n}(j,k) + \max_{k \in x\triangle y}\,(d_{C_n}(0,k)+1).
\end{eqnarray}

\noindent{\bf First embedding of $C_2\bwr C_n$ into $L_1$.} We
denote by $\alpha: C_n\to C_n$ the  shift $\alpha(j)=j+1$. Let us
write $\cal{I}$ for the family of all arcs (i.e. connected subsets)
of $C_n$ of length $\lfloor n/3 \rfloor$ (of which there are $n$).
We define an embedding $f: C_2 \bwr C_n\to \bigoplus_{I \in
\cal{I}}\bigoplus_{A \subseteq I}\ell_1(C_n)$ by
\begin{eqnarray*}
f(x,j) \coloneqq \bigoplus_{I \in \cal{I}}\bigoplus_{A \subseteq
I}\left(
(-1)^{|A\cap\alpha^k(x)|}\cdot\frac{\1_I(k+j)+n\1_{C_n\setminus
I}(k+j)}{n^22^{n/3}}\right)_{k \in C_n}.
\end{eqnarray*}

It is immediate to check that the metric on $C_2\bwr C_n$ given by
$\|f(x,j)-f(x',j')\|_1$ is $C_2\bwr C_n$-invariant. Therefore it
suffices to show that $\|f(x,j) - f(\emptyset,0)\|_1\approx
\rho\big( (x,j),(\emptyset,0)\big)$ for all $(x,j)\in C_2\bwr C_n$.

Now,
\begin{eqnarray}\label{eq:formula}
\|f(x,j) - f(\emptyset,0)\|_1 &\approx& \sum_{I \in \cal{I}}\sum_{A
\subseteq I}\left(\frac{\big|\{k\in C_n:\
\1_I(k)+\1_I(k+j)=1\}\big|}{n2^{n/3}} + \sum_{\substack{k \in
C_n\\|A\cap \alpha^k(x)|\
\mathrm{odd}}}\frac{\1_I(k)+n\1_{C_n\setminus
I}(k)}{n^22^{n/3}}\right)\nonumber\\ &\approx&
d_{C_n}(0,j)+\frac{1}{n^22^{n/3}}\sum_{I \in \cal{I}}\sum_{k \in
C_n}\big|\{A\subseteq I: |A\cap \alpha^k(x)|\ \mathrm{odd}\} \big|
\cdot\big(\1_I(k)+n\1_{C_n\setminus I}(k)\big)\nonumber\\
&\approx& d_{C_n}(0,j)+\frac{1}{n^2}\sum_{I \in
\cal{I}}\sum_{\substack{k \in C_n\\ I\cap \alpha^k(x)\neq
\emptyset}}\big(\1_I(k)+n\1_{C_n\setminus I}(k)\big).
\end{eqnarray}

It suffices to prove the Lipschitz condition $\|f(x,j) -
f(\emptyset,0)\|_1\lesssim \rho\big( (x,j),(\emptyset,0)\big)$ for
the generators of $C_2\bwr C_n$, i.e. when $(x,j)\in
\big\{(\{0\},0),(\emptyset,1)\big\}$. This follows immediately
from~\eqref{eq:formula} since when $(x,j)= (\emptyset,1)$ then the
second summand in~\eqref{eq:formula} is empty, and therefore $
\|f(\emptyset,1) - f(\emptyset,0)\|_1\approx 1=
\rho\big((\emptyset,1),(\emptyset,0)\big)$, and
$$
\|f(\{0\},0) - f(\emptyset,0)\|_1\approx \frac{1}{n^2}\sum_{I \in
\cal{I}}\sum_{k \in I}\left(\1_I(k)+n\1_{C_n\setminus
I}(k)\right)\approx 1\lesssim \rho\big((\{0\},0),(\emptyset,0)\big).
$$

To prove the lower bound $\|f(x,j) - f(\emptyset,0)\|_1\gtrsim
\rho\big( (x,j),(\emptyset,0)\big)$ suppose that $\ell \in x$ is a
point of $x$ at a maximal distance from $0$ in $C_n$. By considering
only the terms in~\eqref{eq:formula}  for which $\alpha^{k}(\ell)\in
I$ we see that
\begin{multline*}
\|f(x,j) - f(\emptyset,0)\|_1\gtrsim
d_{C_n}(0,j)+\frac{1}{n^2}\sum_{I \in \cal{I}}\sum_{k \in
\alpha^{-\ell}(I)}\big(\1_I(k)+n\1_{C_n\setminus I}(k)\big)\\\approx
d_{C_n}(0,j)+\frac{1}{n^2}\sum_{I \in \cal{I}} \big|I\cap
\alpha^{-\ell}(I)\big|+\frac{1}{n}\sum_{I \in \cal{I}} \big|
\alpha^{-\ell}(I)\setminus I\big| \gtrsim
d_{C_n}(0,j)+\big(1+d_{C_n}(0,\ell)\big)\gtrsim
\rho\left((x,j),(\emptyset,0)\right).
\end{multline*}
This completes the proof that $f$ is bi-Lipschitz with $O(1)$
distortion.\qed

\begin{remark}{\em Fix $s\in (1/2,1)$ and consider the embedding $f: C_2 \bwr C_n\to
\bigoplus_{I \in \cal{I}}\bigoplus_{A \subseteq I}\ell_2(C_n)$ given
by
\begin{eqnarray*}
f(x,j) \coloneqq \bigoplus_{I \in \cal{I}}\bigoplus_{A \subseteq
I}\left( (-1)^{|A\cap\alpha^k(x)|}\cdot\frac{\1_I(k+j)+\sqrt{n}\cdot
\left[d_{C_n}(k+j,I)\right]^{s-\frac12}}{n2^{n/6}}\right)_{k \in
C_n}.
\end{eqnarray*}
Arguing similarly to~\cite{ANV07} (and the above) shows that
$\rho(u,v)^s\lesssim \|f(u)-f(v)\|_2\lesssim \rho(u,v)$ for all
$u,v\in C_2\bwr C_n$, where the implied constants are independent of
$n$. By a standard limiting argument it follows that
$\alpha^*(C_2\bwr \Z)=1$. This fact was first proved by Tessera
in~\cite{Tess06} via a different approach.\fin}
\end{remark}

\noindent{\bf Second embedding of $C_2\bwr C_n$ into $L_1$.} Let
$\cal{J}$ be the set of all arcs in $C_n$. In what follows for $J\in
\cal{J}$ we let $J^\circ$ denote the interior of $J$. Let
$\{v_{J,A}:\ J\in \cal{J},\ A\subseteq J\}$ be disjointly supported
unit vectors in $L_1$. Define $f:C_2\bwr C_n\to \C\oplus L_1$ by
$$
f(x,j)\coloneqq \left(ne^{\frac{2\pi i j}{n}}\right)\oplus
\left(\frac{1}{n}\sum_{J\in \cal{J}} \1_{\{j\notin J^\circ\}}
v_{J,x\cap J}\right).
$$

As before, since the metric on $C_2\bwr C_n$ given by
$\|f(x,j)-f(x',j')\|_1$ is $C_2\bwr C_n$-invariant, it suffices to
show that $\|f(x,j) - f(\emptyset,0)\|_1\approx \rho\big(
(x,j),(\emptyset,0)\big)$ for all $(x,j)\in C_2\bwr C_n$. Now,
\begin{multline}\label{eq:formula1}
\|f(x,j)-f(\emptyset ,0)\|_1\approx d_{C_n}(0,j)+\frac{1}{n}
\sum_{J\in \cal{J}}\left\|\1_{\{j\notin J^\circ\}} v_{J,x\cap
J}-\1_{\{0\notin J^\circ\}}
v_{J,\emptyset}\right\|_1\\=d_{C_n}(0,j)+\frac{1}{n}
\sum_{\substack{J\in \cal{J}\\ x\cap
J=\emptyset}}\left|\1_{\{j\notin J^\circ\}} -\1_{\{0\notin
J^\circ\}} \right|+\frac{1}{n} \sum_{\substack{J\in \cal{J}\\ x\cap
J\neq\emptyset}}\left(\1_{\{j\notin J^\circ\}} +\1_{\{0\notin
J^\circ\}} \right).
\end{multline}
We check the Lipschitz condition for the generators $ (\emptyset,1)$
and $(\{0\},0)$ as follows:
\begin{eqnarray*}\label{eq:generator1}
\|f(\emptyset,1)-f(\emptyset
,0)\|_1\stackrel{\eqref{eq:formula1}}{\approx}
1+\frac{1}{n}\left|\left\{J\in \cal{J}:\ \left|\{0,1\}\cap
J^\circ\right|=1\right\}\right|\approx 1= \rho\big(
(\emptyset,1),(\emptyset,0)\big),
\end{eqnarray*}
and
\begin{eqnarray*}\label{eq:generator1}
\|f(\{0\},0)-f(\emptyset
,0)\|_1\stackrel{\eqref{eq:formula1}}{\approx}
\frac{1}{n}\left|\left\{J\in \cal{J}:\ 0\in J\setminus
J^\circ\right\}\right|\approx 1= \rho\big(
(\{0\},0),(\emptyset,0)\big).
\end{eqnarray*}
Hence $\|f(x,j) - f(\emptyset,0)\|_1\lesssim \rho\big(
(x,j),(\emptyset,0)\big)$ for all $(x,j)\in C_2\bwr C_n$.

To prove the lower bound $\|f(x,j) - f(\emptyset,0)\|_1\gtrsim
\rho\big( (x,j),(\emptyset,0)\big)$ suppose that $\ell \in x$ is a
point of $x$ at a maximal distance from $0$ in $C_n$. Then
\begin{multline}\label{eq:formula2}
\|f(x,j) - f(\emptyset,0)\|_1\stackrel{\eqref{eq:formula1}}{\gtrsim}
d_{C_n}(0,j)+\frac{1}{n} \sum_{\substack{J\in \cal{J}\\
\ell\in J}}\left(\1_{\{j\notin J^\circ\}} +\1_{\{0\notin J^\circ\}}
\right)\approx d_{C_n}(0,j)+\frac{1}{n}\left|\left\{J\in \cal{J}:\
\ell\in J\ \wedge\  \{0,j\}\setminus J^\circ\neq
\emptyset\right\}\right|\\\gtrsim
d_{C_n}(0,j)+\frac{(\ell+1)(n-\ell)}{n}\approx
d_{C_n}(0,j)+d_{C_n}(0,\ell)+1 \approx \rho\big(
(x,j),(\emptyset,0)\big),
\end{multline}
Where in~\eqref{eq:formula2} we used the fact that the intervals
$\big\{[a,b]:\ a\in \{0,\ldots,\ell\},\
b\in\{\ell,\ldots,n-1\}\big\}$ do not contain $0$ in their interior,
but do contain $\ell$.\qed

\begin{remark} {\em A separable metric space embeds with distortion $D$ into $L_p$ if and
only if all its finite subsets do. Therefore our embeddings for
$C_2\bwr C_n$ into $L_1$ imply that $C_2\bwr \Z$ admits a
bi-Lipschitz embedding into $L_1$. This can also be seen via the
explicit embedding $F(x,j)\coloneqq j\oplus
\big(\psi(x,j)-\psi(0,0)\big)$, where
$$
F(x,j)\coloneqq \sum_{k\ge j} v_{[k,\infty),x\cap
[k,\infty)}+\sum_{k\le j} v_{(-\infty,k],x\cap (-\infty,k]},
$$
and $\{v_{J,A}:\ J\in \{[k,\infty)\}_{k\in \Z}\cup
\{(-\infty,k]\}_{k\in \Z},\ A\subseteq J\}$ are disjointly supported
unit vectors in $L_1$.\fin}
\end{remark}


\section{Edge Markov type need not imply Enflo type}\label{sec:edge}

A Markov chain $\{Z_t\}_{t=0}^\infty$ with transition probabilities
$a_{ij}\coloneqq\Pr(Z_{t+1}=j\mid Z_t=i)$ on the state space
$\{1,\ldots,n\}$ is {\em stationary\/} if $\pi_i\coloneqq\Pr(Z_t=i)$
does not depend on $t$ and it is {\em reversible\/} if
$\pi_i\,a_{ij}=\pi_j\,a_{ji}$ for every $i,j\in\{1,\ldots,n\}$.
Given a metric space $(X,d_X)$ and $p\in [1,\infty)$, we say that
$X$ has {\em Markov type} $p$ if there exists a constant $K>0$ such
that for every stationary reversible Markov chain
$\{Z_t\}_{t=0}^\infty$ on $\{1,\ldots,n\}$, every mapping
$f:\{1,\ldots,n\}\to X$ and every time $t\in \mathbb N$,
\begin{eqnarray}\label{eq:defMarkov}
\E \big[ d_X(f(Z_t),f(Z_0))^p\big]\le K^p\,t\,\E\big[
d_X(f(Z_1),f(Z_0))^p\big].
\end{eqnarray}
The least such $K$ is called the Markov type $p$ constant of $X$,
and is denoted $M_p(X)$. Similarly, given $D>0$ we let $M^{\le
D}_p(X)$ denote the least constant $K$
satisfying~\eqref{eq:defMarkov} with the additional restriction that
$d_X\left(f(Z_0),f(Z_1)\right)\le D$ holds pointwise. We call
$M^{\le D}_p(X)$ the $D$-bounded increment Markov type $p$ constant
of $X$. Finally, if $(X,d_X)$ is an unweighted graph equipped with
the shortest path metric then the {\em edge Markov type} $p$
constant of $X$, denoted $M_p^\edge(X)$, is the least constant $K$
satisfying~\eqref{eq:defMarkov} with the additional restriction that
$f(Z_0)f(Z_1)$ is an edge (pointwise).

The fact that $L_2$ has Markov type $2$ with constant $1$, first
noted by K. Ball~\cite{Bal}, follows from a simple spectral argument
(see also inequality (8) in~\cite{NPSS06}). Since for $p\in [1,2]$
the metric space $\left(L_p,\|x-y\|_2^{p/2}\right)$ embeds
isometrically into $L_2$ (see~\cite{WW75}), it follows that $L_p$
has Markov type $p$ with constant $1$. For $p>2$ it was shown
in~\cite{NPSS06} that $L_p$ has Markov type $2$ with constants
$O\left(\sqrt{p}\right)$. We refer to~\cite{NPSS06} for a
computation of the Markov type of various additional classes of
metric spaces.

A metric space $(X,d_X)$ is said to have {\em Enflo type $p$} if
there exists a constant $K$ such that for every $n\in \mathbb N$ and
every $f:\{-1,1\}^n\to X$,
\begin{multline}\label{eq:enflo type}
\E \left[d_X(f(\e),f(-\e))^p\right]\\\le T^p\sum_{j=1}^n \E \left[
d_X\left(f(\e_1,\ldots,\e_{j-1},\e_j,\e_{j+1},\ldots,\e_n),f(\e_1,\ldots,\e_{j-1},-\e_j,\e_{j+1},\ldots,\e_n)\right)^p\right],
\end{multline}
where the expectation is with respect to the uniform measure on
$\{-1,1\}^n$. In~\cite{NS02} it was shown that Markov type $p$
implies Enflo type $p$. We define analogously to the case of Markov
type the notions of bounded increment Enflo type and edge Enflo
type.

The notions of Enflo type and Markov type were introduced as
non-linear analogues of the fundamental Banach space notion of {\em
Rademacher type}. We refer
to~\cite{Enf76,BMW86,Bal,NS02,MN05,NPSS06} and the references
therein for background on this topic and many applications. In
Banach space theory the notion analogous to bounded increment Markov
type is known as {\em equal norm Rademacher type}. It is well known
(see~\cite{T-J89}) that for Banach spaces equal norm Rademacher type
$2$ implies Rademacher type $2$ and that for $1<p<2$ equal norm
Rademacher type $p$ implies Rademacher type $q$ for every $q<p$ (but
is {\em does not} generally imply Rademacher type $p$). It is
natural to ask whether the analogous phenomenon holds true for the
above metric analogues of Rademacher type. Here we show that this is
not the case.

It follows from Theorem~\ref{thm:hilbert} that $\alpha^*(\Z\bwr
\Z)\ge\frac23$. Therefore for every $0<\alpha<\frac23$ there is a
mapping $F:\Z\bwr \Z\to L_2$ such that
$$
x,y\in \Z\bwr \Z\implies d_{\Z\bwr \Z} (x,y)^\alpha\lesssim
\|F(x)-F(y)\|_2\lesssim d_{\Z\bwr \Z} (x,y).
$$
Fix a stationary reversible Markov chain $\{Z_t\}_{t=0}^\infty$ on
$\{1,\ldots,n\}$ and a mapping $f:\{1,\ldots,n\}\to \Z \bwr \Z$ such
that $d_{\Z\bwr \Z}\left(f(Z_0),f(Z_1)\right)\le D$ holds pointwise.
Using the fact that $L_2$ has Markov type $2$ with constant $1$ we
deduce that
\begin{multline*}
\E\left[d_{\Z\bwr
\Z}\big(f(Z_t),f(Z_0)\big)^{2\alpha}\right]\lesssim\E\left[\left\|F\circ
f(Z_t)-F\circ f(Z_0)\right\|_2^2\right]\le t\, \E\left[\left\|F\circ
f(Z_1)-F\circ f(Z_0)\right\|_2^2\right]\\\lesssim t\,
\E\left[d_{\Z\bwr \Z}\big(f(Z_1),f(Z_0)\big)^2\right]\lesssim
D^{2(1-\alpha)}t\, \E\left[d_{\Z\bwr
\Z}\big(f(Z_1),f(Z_0)\big)^{2\alpha}\right].
\end{multline*}
Thus
$$
M_{2\alpha}^{\le D} (\Z\bwr \Z)\lesssim D^{1-\alpha}.
$$
In particular $\Z\bwr \Z$ has $D$-bounded increment Markov type $p$
and edge Markov type $p$ for every $p<\frac43$.

On the other hand we claim that $\Z\bwr \Z$ does not have Enflo type
$p$ for any $p>1$. This is seen via an argument that was used by
Arzhantseva, Guba and Sapir in~\cite{AGS06}. Fix $n\in \N$ and
define $f:\{-1,1\}^n\to \Z\bwr \Z$ by
\begin{eqnarray}\label{eq:embed cube}
f(\e_1,\ldots,\e_n)\coloneqq
\left(\sum_{j=n+1}^{2n}\e_{j-n}n\delta_j,0\right),
\end{eqnarray}
where $\delta_j$ is the delta function supported at $j$. Then for
every $\e\in \{-1,1\}^n$,
\begin{eqnarray}\label{eq:diagonals}
d_{\Z\bwr \Z}\big(f(\e),f(-\e)\big)\approx
n^2
\end{eqnarray}
and for every $j\in \{1,\ldots, n\}$,
\begin{eqnarray}\label{eq:edges}d_{\Z\bwr
\Z}\left(f(\e_1,\ldots,\e_{j-1},\e_j,\e_{j+1},\ldots,\e_n),
f(\e_1,\ldots,\e_{j-1},-\e_j,\e_{j+1},\ldots,\e_n)\right)\approx n.
\end{eqnarray}
Therefore if $\Z\bwr \Z$ has Enflo type $p$, i.e. if~\eqref{eq:enflo
type} holds true, then for every $n\in \N$ we have $n^{2p}\lesssim
n^{p+1}$, implying that $p\le 1$.\qed

\section{A lower bound on $\beta^*(G\bwr H)$}\label{sec:beta}

In this section we shall prove~\eqref{eq:gromov}, which is a
generalization of \`Ershler's work~\cite{Ersh01}. Namely, we will
prove the following theorem:

\begin{thm}\label{thm:ershler} Let $G$ and $H$ be finitely generated
groups. If $H$ has linear growth (or equivalently, by Gromov's
theorem~\cite{Gromov81}, $H$ has a subgroup of finite index
isomorphic to $\Z$) then $\beta^*(G\bwr H)\ge
\frac{1+\beta^*(G)}{2}$. For all other finitely generated groups $H$
we have $\beta^*(G\bwr H)=1$.
\end{thm}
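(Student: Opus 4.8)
The proof splits into the two cases of the statement. I would first dispose of the ``all other $H$'' case: if $H$ does not have linear growth, then either $H$ has at least quadratic growth, or $H$ is non-amenable. In the latter case $G \bwr H$ is non-amenable, so $\beta^*(G\bwr H)=1$ by the classical fact (Kesten~\cite{Kest59}, Woess~\cite{Woess00}) that a non-amenable group has a linearly growing random walk. So I only need to handle $H$ amenable with superlinear growth. The goal $\beta^*(G\bwr H)=1$ then amounts to exhibiting a generating set and a constant $c>0$ with $\E[d_{G\bwr H}(W_t,e)]\ge ct$. The idea, going back to \`Ershler, is that the projection of the lamplighter walk onto $H$ is the simple random walk $\{V_t\}$ on $H$, and the lamplighter coordinate must at least have moved the $G$-state at \emph{every} site in the range $\mathcal{R}_t \coloneqq \{V_0,\dots,V_t\}$ where a lamp was switched; using~\eqref{eq:L1connection}, $d_{G\bwr H}(W_t,e) \gtrsim d_H(V_t,e) + (\text{total }G\text{-distance of lamps})$, and already $|\mathcal{R}_t|$ is comparable to the number of distinct sites visited. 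For a group of superlinear growth the expected size of the range of the simple random walk is $\asymp t$ (this is where ``superlinear growth $\Rightarrow$ transient-like range behaviour'' enters — more precisely $\E|\mathcal{R}_t| \gtrsim t$ for any group that is not virtually $\Z$ or finite, which follows from return-probability estimates / the fact that only $\Z$ and $\Z^2$-type groups have $\E|\mathcal{R}_t| = o(t)$, and $\Z^2$ already has $\E|\mathcal{R}_t|\asymp t/\log t$; but one can also just use that the walk on a superlinear group visits $\gtrsim t$ distinct points with the traveling-salesman cost of an $\asymp t$-point subset of $H$ being $\gtrsim t$). Since each visited site carries a lamp that is ``on'' with positive probability and contributes at least $1$ to the word length, $\E[d_{G\bwr H}(W_t,e)] \gtrsim \E|\mathcal{R}_t| \gtrsim t$, giving $\beta^*(G\bwr H)=1$.

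For the main case, $H$ of linear growth — equivalently, by Gromov's polynomial growth theorem~\cite{Gromov81}, $H$ virtually $\Z$ — I want $\beta^*(G\bwr H) \ge \frac{1+\beta^*(G)}{2}$. Fix $\beta < \beta^*(G)$, so there is a generating set of $G$ with $\E[d_G(W^G_s,e_G)] \gtrsim s^\beta$. I would run the standard lamplighter walk on $G\bwr H$: its $H$-coordinate performs the simple random walk $\{V_s\}$ on $H$, which since $H$ is virtually $\Z$ behaves diffusively, i.e. after $t$ steps it has explored an interval (a Følner-type segment) of length $\asymp \sqrt t$, and crucially it spends $\asymp t/\sqrt t = \sqrt t$ units of time at a typical visited site. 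Using~\eqref{eq:L1connection} we have $d_{G\bwr H}(W_t,e) \gtrsim \sum_{z\in H} d_G(f_t(z),e_G)$, where $f_t(z) \in G$ is the accumulated $G$-state at site $z$. Conditioned on the $H$-trajectory, $f_t(z)$ is (distributed as) the value of an independent $G$-random walk run for $L_t(z)$ steps, where $L_t(z)$ is the local time at $z$. Hence, by the speed assumption on $G$ and Jensen, $\E[ d_G(f_t(z),e_G) \mid \text{local times}] \gtrsim L_t(z)^\beta$ for the conditional walk — more carefully, $\E[d_G(W^G_L,e_G)] \gtrsim L^\beta$ — so
\begin{equation*}
\E[d_{G\bwr H}(W_t,e)] \;\gtrsim\; \E\!\left[\sum_{z} L_t(z)^\beta\right].
\end{equation*}
Now $\sum_z L_t(z) = t$, the number of visited sites is $\asymp \sqrt t$, and the local times are each $\asymp \sqrt t$ with good probability, so $\sum_z L_t(z)^\beta \asymp \sqrt t \cdot (\sqrt t)^\beta = t^{(1+\beta)/2}$. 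Letting $\beta \uparrow \beta^*(G)$ gives $\beta^*(G\bwr H) \ge \frac{1+\beta^*(G)}{2}$, as desired.

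**Where the technical work lies.** The conceptual picture above is clean, but the honest version needs two quantitative inputs about the simple random walk on a virtually-$\Z$ group $H$: (i) a lower bound on the expected number of distinct sites with local time $\gtrsim \sqrt t$ — i.e. control of the \emph{local time profile}, not just $\E|\mathcal R_t| \asymp \sqrt t$ — and (ii) passing from ``$\E[d_G(W^G_L,e_G)]\gtrsim L^\beta$ for deterministic $L$'' to a bound valid for the \emph{random} local times $L_t(z)$, which requires either a first-moment/second-moment argument on the set $\{z : L_t(z)\ge \epsilon\sqrt t\}$ or a convexity estimate to handle the conditioning. For a genuinely virtually-$\Z$ (not literally $\Z$) group one also needs to reduce to $\Z$ by passing to a finite-index subgroup and checking the random walk comparisons (lazy vs. non-lazy, change of generators) preserve the speed exponent — routine but must be stated. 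The main obstacle I expect is input (i): making rigorous that a positive proportion of the $\asymp \sqrt t$ visited sites have local time of order $\sqrt t$, which is the place where the one-dimensionality of $H$ is really used, via the arcsine / local-time scaling of the one-dimensional walk (or an explicit Følner-set argument estimating how long the walk, restricted to a length-$\sqrt t$ interval, dwells at each point). The upper bound $\beta^*(G\bwr H) \le \frac{1+\beta^*(G)}{2}$ in the virtually-$\Z$ case, incidentally, is not part of this theorem's claim (only the lower bound is asserted here, and equality is invoked separately via~\eqref{eq:beta}), so I would not attempt it.
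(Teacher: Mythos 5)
Your overall skeleton is the same as the paper's: project the lamplighter walk to $H$, observe that conditional on the $H$-trajectory each lamp $h$ carries an independent $G$-walk of length $\approx 2T_h$ (where $T_h$ is the local time), and reduce the problem to lower-bounding $\E\bigl[\sum_{h} T_h^\beta\bigr]$. That reduction is correct and is exactly what the paper does (its Theorem~\ref{thm:yuval}). Where you differ is in how to bound $\E\bigl[\sum_h T_h^\beta\bigr]$, and there you leave a genuine gap.

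You propose to show that a positive proportion of the $\asymp\sqrt t$ visited sites have local time $\asymp\sqrt t$, i.e.\ control the local-time \emph{profile}, and you correctly flag this as the hard step. The paper avoids it entirely with a slicker decomposition. For each $h$, write $A_\ell$ for the event that $h$ is first visited at time $\ell$. Then
\[
\E\bigl[T_h^\beta\bigr]\;\ge\;\sum_{\ell=0}^{\lfloor n/2\rfloor}\Pr(A_\ell)\,\E\bigl[T_h^\beta\,\big|\,A_\ell\bigr],
\]
and on $A_\ell$ with $\ell\le n/2$ the walk has at least $n/2$ remaining steps, so by the Markov property the conditional number of returns dominates $X_{\lfloor n/2\rfloor}:=|\{0\le k\le \lfloor n/2\rfloor: W_k^H=e_H\}|$. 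Crucially, the paper does not try to understand the distribution of $T_h$ beyond this; it proves directly (Lemma~\ref{lem:yuval}) that $\E[X_n^\beta]\gtrsim\psi_H(n)^\beta$ via a reverse-H\"older interpolation between $\E[X_n]\asymp\psi_H(n)$ and $\E[X_n^2]\lesssim\psi_H(n)^2$, both of which come from Varopoulos' return-probability asymptotics for groups of linear/quadratic growth. Summing over $h$ then produces $\E\bigl[\sum_h T_h^\beta\bigr]\gtrsim\psi_H^\beta\cdot\E\bigl[|W^H_{[0,n/2]}|\bigr]\gtrsim\psi_H^\beta\cdot n/\psi_H$, which is $n^{(1+\beta)/2}$ for linear growth. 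This is strictly easier than your proposed local-time-profile statement and sidesteps the arcsine-type analysis you anticipated needing. It also never reduces to $\Z$: the return-probability estimate $\Pr[W_k^H=e_H]\asymp k^{-1/2}$ holds for any group of linear growth, so no passage to a finite-index $\Z$ is required.

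One smaller issue in the non-linear case: you assert ``the walk on a superlinear group visits $\gtrsim t$ distinct points,'' immediately after (correctly) recalling that $\Z^2$-type groups have $\E|\mathcal R_t|\asymp t/\log t$, which contradicts that assertion. The paper therefore splits the superlinear case into quadratic growth (where it proves $\E[d_{G\bwr H}(W_n,e)]\gtrsim n/(\log n)^{1-\beta}$, still enough to give $\beta^*=1$ since $\log n$ is subpolynomial) and transient walk (where the range is $\asymp n$, by Ka\u\i{}manovich--Vershik), rather than asserting a uniform $\gtrsim t$ range bound. Your non-amenable observation is fine but unnecessary: the paper's transient case already covers it, since non-amenable groups have transient walk.
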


Assume that $G$ is generated by a finite symmetric set $S_G\subseteq
G$ and  $H$  is generated by a finite symmetric set $S_H\subseteq
H$. We also let $e_G, e_H$ denote the identity elements of $G$ and
$H$, respectively. Given $g_1,g_2\in G$ and $h\in H$ define a
mapping $f_{g_1,g_2}^h:H\to G$ by
$$
f_{g_1,g_2}^h(x)\coloneqq \left\{
\begin{array}{ll}g_1 &
\mathrm{if}\ x=e_H,\\
g_2 &\mathrm{if}\  x=h,\\
e_G &\mathrm{otherwise}.\end{array}\right.
$$
It is immediate to check that the set
$$S_{G\bwr H}\coloneqq
\left\{f_{g_1,g_2}^h:\ g_1,g_2\in S_G\ \mathrm{and}\ h\in
S_H\right\}
$$
is symmetric and generates $G\bwr H$.

{}From now on, we will assume that the metrics on $G$, $H$ and $G\bwr
H$ are induced by $S_G$, $S_H$ and $S_{G\bwr H}$, respectively.
Analogously we shall  denote by $\left\{W_k^G\right\}_{k=0}^\infty$,
$\left\{W_k^H\right\}_{k=0}^\infty$ and $\left\{W_k^{G\bwr
H}\right\}_{k=0}^\infty$ the corresponding random walks, starting at
the corresponding identity elements.

\begin{thm}\label{thm:yuval} Assume that for some $\beta\in [0,1]$
we have
\begin{eqnarray}\label{eq: G assumption}
\E \left[d_G\left(W_n^G,e_G\right)\right]\gtrsim n^\beta,
\end{eqnarray}
where the implied constant may depend on $S_G$. If $H$ has linear
growth then
\begin{eqnarray}\label{eq:linear growth}
\E \left[d_{G\bwr H}\left(W_n^{G\bwr H},e_{G\bwr
H}\right)\right]\gtrsim n^{\frac{1+\beta}{2}}.
\end{eqnarray}
If $H$ has quadratic growth then
\begin{eqnarray}\label{eq:quadratic growth}
\E \left[d_{G\bwr H}\left(W_n^{G\bwr H},e_{G\bwr
H}\right)\right]\gtrsim \frac{n}{(1+\log n)^{1-\beta}}.
\end{eqnarray}
If the random walk $\left\{W_n^{H}\right\}_{n=0}^\infty$ is
transient then
\begin{eqnarray}\label{eq:transient}
\E \left[d_{G\bwr H}\left(W_n^{G\bwr H},e_{G\bwr
H}\right)\right]\gtrsim n.
\end{eqnarray}
The implied constants in~\eqref{eq:linear growth},
\eqref{eq:quadratic growth} and~\eqref{eq:transient} may depend on
$S_G$ and $S_H$.
\end{thm}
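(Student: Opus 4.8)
The strategy is to track the behavior of the random walk $\left\{W_n^{G\bwr H}\right\}_{n=0}^\infty$ in terms of its two components: the underlying ``lamplighter position'' walk on $H$, and the accumulated configuration of ``lamp'' states in $G$. Write $W_n^{G\bwr H} = (F_n, X_n)$, so that $\left\{X_n\right\}$ is (up to lazy steps) the simple random walk on $H$ driven by $S_H$, and at each step the walker, standing at some $x\in H$, may additionally multiply both $F_n(x)$ and $F_n(x s_H)$ by independent generators of $G$ (this is the structure forced by the generating set $S_{G\bwr H}$). By the metric description~\eqref{eq:L1connection}, for $\mathscr{L}_G(H)$-type reasons we have
\begin{equation*}
d_{G\bwr H}\left(W_n^{G\bwr H},e_{G\bwr H}\right)\ \approx\ d_{\mathscr{L}_G(H)}\left(W_n^{G\bwr H},e_{G\bwr H}\right)\ +\ \sum_{z\in H} d_G\left(F_n(z),e_G\right),
\end{equation*}
and the plan is to lower-bound the second (``fuel cost'') term, namely the sum over all sites visited of the distance in $G$ that the lamp at that site has been moved.

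The key probabilistic input is the following: fix a site $z\in H$; conditioned on the walk having spent $k$ steps at (or adjacent to) $z$, the lamp value $F_n(z)$ is distributed as $W_k^G$ (a time-changed copy of the $G$-walk), so by~\eqref{eq: G assumption} its expected distance to $e_G$ is $\gtrsim k^\beta$. Hence
\begin{equation*}
\E\left[\sum_{z\in H} d_G\left(F_n(z),e_G\right)\right]\ \gtrsim\ \E\left[\sum_{z\in H} L_n(z)^\beta\right],
\end{equation*}
where $L_n(z)$ is the local time of the $H$-walk at $z$ up to time $n$. Since $\sum_z L_n(z)=n$ and $\beta\le 1$, by concavity (Jensen) the right-hand side is minimized when the local time is spread out as evenly as possible over the range $R_n\coloneqq\left|\left\{z: L_n(z)>0\right\}\right|$ of the walk, giving a lower bound of roughly $R_n \cdot (n/R_n)^\beta = R_n^{1-\beta} n^\beta$. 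Thus the whole problem reduces to controlling $\E\left[R_n^{1-\beta}\right]$ for the simple random walk on $H$, which is a standard quantity: for $H$ of linear growth $R_n\approx \sqrt n$ with high probability, giving $n^{(1-\beta)/2}\cdot n^\beta = n^{(1+\beta)/2}$ and~\eqref{eq:linear growth}; for $H$ of quadratic growth $R_n\approx n/\log n$, giving~\eqref{eq:quadratic growth}; and for transient $H$, $R_n\approx n$, giving~\eqref{eq:transient}. One must be slightly careful because $x\mapsto x^{1-\beta}$ is concave, so one cannot simply push the expectation inside; instead I would use a high-probability lower bound on $R_n$ (e.g. $R_n\gtrsim\sqrt n$ with probability bounded below) together with the fact that $\sum_z L_n(z)^\beta \ge R_n \cdot (n/R_n)^\beta$ holds deterministically on that event.

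The main obstacle, and the step requiring the most care, is the conditioning argument establishing $\E\left[d_G(F_n(z),e_G)\,\middle|\,\text{local time at }z=k\right]\gtrsim k^\beta$. The subtlety is that the lamp at $z$ is updated not only when the walker sits at $z$ but also when it sits at a neighbor $z s_H^{-1}$, and the generator of $G$ applied at each visit depends on the (random, $H$-walk-correlated) choice in $S_{G\bwr H}$; one has to argue that, conditioned on the entire trajectory $\left\{X_n\right\}$ of the $H$-walk, the lamp value $F_n(z)$ is an honest random product of $\Theta(L_n(z))$ i.i.d.\ elements drawn uniformly from $S_G\cup\{e_G\}$ (a lazy $G$-walk), whose expected displacement is still $\gtrsim k^\beta$ by~\eqref{eq: G assumption} (laziness only changes the implied constant). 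A clean way to handle this is to condition on the $H$-trajectory, observe that the lamp increments at distinct sites are then conditionally independent, and invoke a subadditivity/monotonicity property of $n\mapsto\E\left[d_G(W_n^G,e_G)\right]$ to pass from the true (lazy, time-changed) $G$-walk back to the bound in~\eqref{eq: G assumption}. Once this lemma is in place, the rest is the local-time combinatorics and the standard range estimates for random walks on groups of polynomial growth and transient groups, which are routine.
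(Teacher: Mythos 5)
Your overall scaffolding --- projecting the walk to $H$, tracking local times, reducing to the range of the $H$-walk --- is the same as the paper's, and your target computation (range times per-site contribution) is the right one. But there is a genuine and fatal gap at the step you mark as the ``reduction to range,'' and it is not a technicality: the inequality you invoke is reversed.

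You write that since $\sum_z L_n(z) = n$ and $\beta \le 1$, concavity/Jensen gives the deterministic lower bound $\sum_z L_n(z)^\beta \ge R_n \cdot (n/R_n)^\beta = R_n^{1-\beta} n^\beta$, with the even spreading as the \emph{minimizer}. This is backwards. For a concave function $\phi$, the power-mean (Jensen) inequality gives $\frac{1}{R_n}\sum_{z : L_n(z)>0} \phi(L_n(z)) \le \phi\big(\frac{1}{R_n}\sum L_n(z)\big)$, i.e.\ $\sum_z L_n(z)^\beta \le R_n^{1-\beta} n^\beta$; even spreading \emph{maximizes} the sum, not minimizes it. The deterministic minimum subject to $L_z \ge 1$, $\sum L_z = n$ is attained by piling almost all mass on one site, giving only about $R_n + n^\beta$, which is far too small (e.g.\ for $H$ of linear growth it yields $\sqrt{n}$, not $n^{(1+\beta)/2}$). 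So there is no deterministic lower bound of the form you need, and your plan of ``high-probability bound on $R_n$ plus the deterministic inequality on that event'' collapses.

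The reason the paper's proof avoids this is that it never tries to lower bound $\sum_z L_n(z)^\beta$ pathwise. Instead it bounds the \emph{expectation per site}: it proves (Lemma~\ref{lem:yuval}) that for the number of returns $X_n$ to a fixed visited point, $\E[X_n^\beta] \gtrsim \psi_H(n)^\beta$, and this requires real probabilistic input --- the first moment $\E[X_n] \asymp \psi_H(n)$ from the local CLT/Varopoulos bounds, an upper bound on the second moment $\E[X_n^2] \lesssim \psi_H(n)^2$, and then a H\"older (Paley--Zygmund-type) interpolation to push the estimate down to the $\beta$-th moment. The second-moment control is exactly what certifies that local times are ``spread out'' in the averaged sense you need; a pointwise concavity argument cannot deliver this. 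After that, the paper conditions on the first visit time to each site $h$, applies the moment bound on the post-visit returns, and sums over $h$ to get $\psi_H(n)^\beta \cdot \E[|W^H_{[0,n/2]}|] \gtrsim n/\psi_H(n)^{1-\beta}$, which matches your intended final answer. In short: your outline is morally right, but to make it correct you have to replace the deterministic concavity step with a per-site second-moment argument, at which point you essentially arrive at the paper's proof.

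Your conditioning step (that $F_n(z)$ is distributed as $W^G_{2T_z}$ or similar given the $H$-trajectory, and that laziness only affects constants) is in line with the paper's and is fine modulo bookkeeping, as you already anticipate.
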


Theorem~\ref{thm:ershler} is a consequence of
Theorem~\ref{thm:yuval} since by Varopoulos' celebrated
result~\cite{Var85,Var87} (which relies on Gromov's growth
theorem~\cite{Gromov81}. See~\cite{KV83} and~\cite{Woess00} for a
detailed discussion), the three possibilities in
Theorem~\ref{thm:yuval} are exhaustive for infinite finitely
generated groups $H$. In the case when the random walk on $H$ is
transient, Theorem~\ref{thm:yuval} was previously proved by
Ka{\u\i}manovich and Vershik in~\cite{KV83}.

The following lemma will be used in the proof of
Theorem~\ref{thm:yuval}.

\begin{lem}\label{lem:yuval} Define for $n\in \N$,
\begin{eqnarray*}\label{eq:varo}
\psi_H(n)\coloneqq \left\{\begin{array}{ll}\sqrt{n}& \textrm{if $H$
has linear growth,} \\
1+\log n & \textrm{if $H$ has quadratic growth,}\\
1 & \textrm{otherwise.}\end{array}\right.
\end{eqnarray*}
Then
\begin{eqnarray}\label{eq:return time}
\E\left[ \left|\left\{0\le k\le n:
W_k^H=e_H\right\}\right|^\beta\right]\gtrsim \psi_H(n)^\beta,
\end{eqnarray}
and
\begin{eqnarray}\label{eq:range}
\E \left[\left|W^H_{[0,n]}\right|\right]\gtrsim \frac{n}{\psi_H(n)},
\end{eqnarray}
where $W^H_{[0,n]}\coloneqq \left\{W_0^H,\ldots,W_n^H\right\}$.
\end{lem}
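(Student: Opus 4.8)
The lemma bundles together two quantities attached to the $H$-walk — the (power-$\beta$ moment of the) number of visits to $e_H$, and the expected range — and claims matching lower bounds governed by the single profile $\psi_H$. The natural strategy is to handle the three growth regimes separately, using standard random-walk heuristics in each, and to derive the range bound from the return-time bound by a first-moment/renewal argument rather than proving it from scratch.

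First I would establish \eqref{eq:return time}. Write $L_n\coloneqq\left|\left\{0\le k\le n:\ W_k^H=e_H\right\}\right|=\sum_{k=0}^n \1_{\{W_k^H=e_H\}}$, so that $\E[L_n]=\sum_{k=0}^n \Pr[W_k^H=e_H]$. By Varopoulos-type on-diagonal estimates (which hold here because $H$ has polynomial growth of the relevant degree, or is transient), $\Pr[W_{2k}^H=e_H]\asymp k^{-1/2}$ when $H$ has linear growth, $\asymp 1/k$ when $H$ has quadratic growth, and $\sum_k\Pr[W_k^H=e_H]<\infty$ in the transient case; summing gives $\E[L_n]\gtrsim \psi_H(n)$ in each case. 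For $\beta=1$ this is already \eqref{eq:return time}; for $\beta<1$ one needs a lower bound on $\E[L_n^\beta]$, which does \emph{not} follow from $\E[L_n]$ alone (Jensen goes the wrong way). The standard fix: show $L_n$ is, up to constants, concentrated at its mean, or more simply use a Paley--Zygmund-type second-moment bound $\Pr[L_n\ge \tfrac12\E L_n]\gtrsim (\E L_n)^2/\E[L_n^2]$ together with the bound $\E[L_n^2]\lesssim (\E L_n)^2$ that follows from the Markov property (the walk must return once, then performs an independent walk of shorter length that returns again, giving $\E[L_n^2]\lesssim \E[L_n]\cdot\E[L_n]$). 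On the event $\{L_n\ge\tfrac12\E L_n\}$ we have $L_n^\beta\gtrsim (\E L_n)^\beta\gtrsim\psi_H(n)^\beta$, so $\E[L_n^\beta]\gtrsim\psi_H(n)^\beta$. The transient case is easiest: $L_n\ge L_\infty\ge 1$ and $\E[L_\infty^\beta]$ is a positive constant, which is $\gtrsim 1=\psi_H(n)^\beta$.

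Next, \eqref{eq:range}. The clean way is a renewal / last-visit decomposition: writing $R_n\coloneqq|W^H_{[0,n]}|$ for the range, one has the classical identity $\E[R_n]=\sum_{k=0}^n\Pr[W_k^H\ne W_j^H\text{ for all }j<k]=\sum_{k=0}^n\Pr[\text{no return to }e_H\text{ in steps }1,\dots,k]$ (by reversibility/translation invariance of the walk on the group). If $q_k$ denotes this ``fresh point'' probability, then on one hand $\sum_{k\le n}q_k=\E[R_n]$, and on the other hand a visit-counting identity gives $\E[L_n]=\sum_{k\le n}\Pr[W_k^H=e_H]$ while $q_k\gtrsim 1/\E[L_k]$ by a last-return decomposition ($1=\Pr[W_k=e_H\text{ or not}]$ split over the time of last visit to $e_H$ before $k$). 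Plugging $\E[L_k]\lesssim\psi_H(k)\le\psi_H(n)$ yields $\E[R_n]\gtrsim\sum_{k\le n}1/\psi_H(n)\asymp n/\psi_H(n)$, as desired. Alternatively, and perhaps more transparently, one can invoke the known first-order asymptotics $\E[R_n]\asymp n/\psi_H(n)$ directly from the literature (Varopoulos, Ka\u\i manovich--Vershik), since these are precisely the standard range estimates for polynomial-growth and transient groups.

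The main obstacle is the $\beta<1$ case of \eqref{eq:return time}: passing from a lower bound on $\E[L_n]$ to one on $\E[L_n^\beta]$ genuinely requires an upper matching bound on $\E[L_n]$ (or on $\E[L_n^2]$) so that a second-moment argument bites. This upper bound is again a Varopoulos on-diagonal estimate $\Pr[W_k^H=e_H]\lesssim k^{-1/2}$ resp. $k^{-1}$, which I would quote; everything else is routine. I would present the proof regime by regime, doing the transient case first (trivial), then quadratic growth, then linear growth, and in each case deriving \eqref{eq:range} from \eqref{eq:return time} by the renewal identity above so as not to repeat the concentration argument.
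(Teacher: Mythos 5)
Your argument for \eqref{eq:return time} matches the paper's in all essentials: both rely on the Varopoulos on-diagonal estimates to get $\E[L_n]\asymp\psi_H(n)$, then a second-moment bound $\E[L_n^2]\lesssim(\E L_n)^2$ via the Markov property, and finally a one-line concentration step (you via Paley--Zygmund, the paper via H\"older's inequality applied to $X_n=X_n^{\beta/(2-\beta)}\cdot X_n^{(2-2\beta)/(2-\beta)}$); these are interchangeable.

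For \eqref{eq:range} you take a genuinely different route than the paper. The paper looks at $Y_k$, the time spent in the first $k$ distinct sites, bounds $\E[Y_k]\lesssim k\,\psi_H(n)$ via the strong Markov property, and applies Markov's inequality to $\Pr[|W^H_{[0,n]}|\le k]\le\Pr[Y_k\ge n]$; you instead use the classical ``fresh point'' identity $\E[R_n]=\sum_{k\le n}q_k$ where $q_k$ is the non-return probability. Both are sound strategies, but there is a real gap in one step of yours: the last-exit decomposition $1=\sum_{j=0}^k\Pr[W_j^H=e_H]\,q_{k-j}$ gives, using that $q$ is nonincreasing, the \emph{opposite} inequality $q_k\,\E[L_k]\le 1$, not your claimed $q_k\gtrsim 1/\E[L_k]$. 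The pointwise bound $q_k\gtrsim 1/\psi_H(k)$ is in fact true (Dvoretzky--Erd\H{o}s type estimates), but it does not follow from the decomposition as you state it. The cleanest repair that stays within your framework is to sum the last-exit identity over $k=0,\dots,n$ and swap the order of summation:
\begin{align*}
n+1 \;=\; \sum_{k=0}^n\sum_{j=0}^k\Pr[W_j^H=e_H]\,q_{k-j}
\;=\;\sum_{j=0}^n\Pr[W_j^H=e_H]\sum_{m=0}^{n-j}q_m
\;\le\;\E[L_n]\cdot\E[R_n],
\end{align*}
which gives $\E[R_n]\ge (n+1)/\E[L_n]\gtrsim n/\psi_H(n)$ directly, without any pointwise lower bound on $q_k$. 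With that modification your proof is complete and is a clean alternative to the paper's argument; the fallback you mention (citing the known range asymptotics) is also acceptable but less self-contained.
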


\begin{proof} By a theorem of
Varapoulos~\cite{Var85-cras,Var87} (see also~\cite{HSC93} and
Theorem 4.1 in~\cite{Woess00}) for every $k\ge 0$,
\begin{eqnarray}\label{eq:varop}
\Pr\left[W_k^H=e_H\right]+\Pr\left[W_{k+1}^H=e_H\right]\approx
\left\{\begin{array}{ll}\frac{1}{\sqrt {k+1}} & \textrm{if $H$ has
linear growth,}\\ \frac{1}{k+1}& \textrm{if $H$ has quadratic
growth,}\end{array}\right.
\end{eqnarray}
and if $H$ has super-quadratic growth then $\sum_{k=1}^\infty
\Pr\left[W_k^H=e_H\right]<\infty$. Hence, if we denote
$$
X_n\coloneqq \left|\left\{0\le k\le n:
W_k^H=e_H\right\}\right|=\sum_{k=0}^n \1_{\{W^H_k=e_H\}}
$$
then it follows that
\begin{eqnarray}\label{eq:first moment}
\E\left[X_n\right]=\sum_{k=0}^n \Pr
\left[W_k^H=e_H\right]\stackrel{\eqref{eq:varop}}{\approx}
\psi_H(n).
\end{eqnarray}
To prove~\eqref{eq:return time} note that
\begin{eqnarray*}\label{eq:second moment}
\E\left[X_n^2\right]=\sum_{i,j=0}^n \Pr \left[W_i^H=e_H\ \wedge\
W_j^H=e_H\right]\le 2\sum_{i=0}^n \sum_{k=0}^{n-i}
\Pr\left[W_i^H=e_H\right]\cdot \Pr\left[W_k^H=e_H\right]\le
2\left(\E\left[X_n\right]\right)^2\stackrel{\eqref{eq:first
moment}}{\approx} \psi_H(n)^2.
\end{eqnarray*}
Using H\"older's inequality we deduce that
$$
\psi_H(n)\approx
\E\left[X_n\right]=\E\left[X_n^{\frac{\beta}{2-\beta}}\cdot
X_n^{\frac{2-2\beta}{2-\beta}}\right]\le
\left(\E\left[X_n^\beta\right]\right)^{\frac{1}{2-\beta}}
\left(\E\left[X_n^2\right]\right)^{\frac{1-\beta}{2-\beta}}\lesssim
\left(\E\left[X_n^\beta\right]\right)^{\frac{1}{2-\beta}}\psi_H(n)^{\frac{2-2\beta}{2-\beta}}.
$$
This simplifies to $\E\left[X_n^\beta\right]\gtrsim
\psi_H(n)^\beta$, which is precisely~\eqref{eq:return time}.

We now pass to the proof of~\eqref{eq:range}. For every
$k\in\{1,\ldots,n\}$ denote by $V_1,\ldots,V_k$ the first $k$
elements of $H$ that were visited by the walk
$\left\{W_j^H\right\}_{j=0}^\infty$. Write
$$
Y_k\coloneqq \left|\left\{0\le j\le n: W_j^H\in
\left\{V_1,\ldots,V_k\right\}\right\}\right|.
$$
Then
$$
\E\left[Y_k\right]= \sum_{j=1}^k \E\left[ \left|\left\{0\le j\le n:
W_j^H=V_j\right\}\right|\right]\le k\sum_{r=0}^n \Pr
\left[W_r^H=e_H\right]\stackrel{\eqref{eq:varop}}{\approx}
k\psi_H(n).
$$
Therefore for every $k\in \N$,
$$
\Pr\left[\left|W^H_{[0,n]}\right|\le k\right]\le \Pr\left[Y_k\ge
n\right]\le \frac{\E\left[Y_k\right]}{n}\lesssim
\frac{k\psi_H(n)}{n}.
$$
Hence we can choose $k\approx \frac{n}{\psi_H(n)}$ for which $
\Pr\left[\left|W^H_{[0,n]}\right|\ge k\right]\ge \frac12$,
implying~\eqref{eq:range}.
\end{proof}

\begin{proof}[Proof of Theorem~\ref{thm:yuval}] We may assume that
$n\ge 4$. Let $Q_H: G\bwr H\to H$ be the natural projection, i.e.
$Q_H(f,x)\coloneqq x$. Also, for every $x\in H$ let $Q_G^x:G\bwr
H\to G$ be the projection $Q_G^x(f,y) \coloneqq f(x)$.

Fix $n\in \N$. For every $h\in H$ denote
$$
T_h\coloneqq \left|\left\{0\le k\le n:\ Q_H\left(W^{G\bwr
H}_k\right)=h\right\}\right|.
$$
The set of generators $S_{G\bwr H}$ was constructed so that the
random walk on $G\bwr H$ can be informally described as follows: at
each step the ``$H$ coordinate" is multiplied by a random element
$h\in S_H$. The ``$G$ coordinate" is multiplied by a random element
$g_1\in S_G$ at the original $H$ coordinate of the walker, and {\em
also} by a random element $g_2\in S_G$ (which is independent of
$g_1$) at the new $H$ coordinate of the walker. This immediately
implies that the projection $\left\{Q_H\left(W_k^{G\bwr
H}\right)\right\}_{k=0}^\infty$ has the same distribution as
$\left\{W_k^H\right\}_{k=0}^\infty$. Moreover, conditioned on
$\{T_h\}_{h\in H}$ and on $Q_H\left(W_{n}^{G\bwr H}\right)$, if
$h\in H\setminus \left\{e_H,Q_H\left(W_{n}^{G\bwr H}\right)\right\}$
then the element $Q_G^h\left(W_{n}^{G\bwr H}\right)\in G$ has the
same distribution as $W_{2T_h}^G$. If $h\in
\left\{e_H,Q_H\left(W_{n}^{G\bwr H}\right)\right\}$ and $e_H\neq
Q_H\left(W_{n}^{G\bwr H}\right)$ then $Q_G^h\left(W_{n}^{G\bwr
H}\right)$ has the same distribution as $W_{\max\{2T_h-1,0\}}^G$,
and if $e_H= Q_H\left(W_{n}^{G\bwr H}\right)$ then
$Q_G^h\left(W_{n}^{G\bwr H}\right)$ has the same distribution as
$W_{2T_h}^G$.

These observations imply, using~\eqref{eq: G assumption}, that for
every $h\in H$ we have $\E\left[d_{G}\left(Q^h_G\left(W_{n}^{G\bwr
H}\right),e_G\right)\right]\gtrsim \E \left[T_h^\beta\right]$.
Writing $A_\ell\coloneqq \left\{h=W_\ell^H\ \wedge\ h\notin
W_{[0,\ell-1]}^H\right\}$ we see that
$$
\E \left[T_h^\beta\right]\ge \sum_{\ell=0}^{\lfloor
n/2\rfloor}\Pr(A_\ell)\cdot\E\left[T_h^\beta\big|
A_\ell\right]\stackrel{\eqref{eq:return time}}{\ge}
\sum_{\ell=0}^{\lfloor
n/2\rfloor}\Pr(A_\ell)\cdot\psi_H(n/2)^\beta=\Pr\left[h\in
W^H_{[0,\lfloor n/2\rfloor]}\right]\psi_H(n/2)^\beta.
$$
Hence,
\begin{multline*}\label{eq:summands}
\E \left[d_{G\bwr H}\left(W_{n}^{G\bwr H},e_{G\bwr
H}\right)\right]\gtrsim \sum_{h\in H} \E \left[
d_G\left(Q_G^h\left(W_{n}^{G\bwr H}\right),e_G\right)\right]\gtrsim
\sum_{h\in H}\E \left[T_h^\beta\right]\\ \gtrsim
\psi_H\left(n\right)^\beta\sum_{h\in H}\Pr\left[h\in W^H_{[0,\lfloor
n/2\rfloor]}\right] = \psi_H(n)^\beta\cdot
\E\left[\left|W^H_{[0,\lfloor n/2\rfloor]}\right|\right]
\stackrel{\eqref{eq:range}}{\gtrsim} \frac{n}{\psi_H(n)^{1-\beta}}.
\end{multline*}
This is precisely the assertion of Theorem~\ref{thm:yuval}.
\end{proof}

\begin{remark}\label{rem:valette} {\em In~\cite{CSV07} de Cornulier,
Stalder and Valette show that if $G$ is a finite group then for
every $p\ge 1$ we have $\alpha^\#_p(G\bwr F_n)\ge \frac{1}{p}$,
where $F_n$ denotes the free group on $n\ge 2$ generators. Note that
in combination with Lemma~\ref{lem:p>2} this implies that we
actually  $\alpha^\#_p(G\bwr F_n)\ge
\max\left\{\frac{1}{p},\frac12\right\}$. This bound is sharp due to
Theorem~\ref{thm:equialpha} and the fact that $\beta^*(G\bwr
F_n)=1$.

In fact, we have the following stronger result: if $X$ is a Banach
space with modulus of smoothness of power type $p$, $G$ is a
nontrivial group, and  $H$ is a group whose volume growth is at
least quadratic, then $\alpha^*_X(G\bwr H)\le \frac{1}{p}$. In
particular $\alpha^*_p(G\bwr F_2)=
\max\left\{\frac{1}{p},\frac12\right\}$. To prove the above
assertion note that it is enough to deal with the case $G=C_2$. If
$H$ is amenable then by Theorem~\ref{thm:ershler} we have
$\beta^*(C_2\bwr H)=1$, so that the required result follows from the
result of~\cite{ANP07} and the fact that $X$ has Markov type
$p$~\cite{NPSS06}. If $H$ is nonamenable then it has exponential
growth (see~\cite{Pat88}). Thus $\gamma\coloneqq \lim_{r\to\infty}
|B(e_H,r)|^{1/r}>1$, where $B(x,r)$ denotes the ball of radius $r$
centered at $x$ in the word metric on $H$ (note that the existence
of the limit follows from submultiplicativity). Fix $\delta\in
(0,1)$  such that $\eta\coloneqq
\frac{(1-\delta)^2\gamma}{1+\delta}>1$ and let $k_0\in \N$ be such
that for all $k\ge k_0$ we have $[(1-\delta)\gamma]^k\le
|B(e_H,k)|\le [(1+\delta)\gamma]^k$. For $k\ge k_0$ let
$\{x_1,\ldots,x_N\}$ be a maximal subset of $B(e_H,2k)$ such that
the balls $\{B(x_i,k/2)\}_{i=1}^N$ are disjoint. Maximality implies
that the balls $\{B(x_i,k)\}_{i=1}^N$ cover $B(x,2k)$, so that
$$
[(1+\delta)\gamma]^kN\ge N|B(e_H,k)|\ge \left|\bigcup_{i=1}^N
B(x_i,k)\right|\ge |B(e_H,2k)|\ge [(1-\delta)\gamma]^{2k},
$$
which simplifies to give the lower bound $ N\ge \eta^k$. Thus $k \lesssim \log N$.

Fix $\alpha\in [0,1]$ and assume that $F:C_2\bwr H\to X$ satisfies
$$
x,y\in C_2\bwr H\implies d_{C_2\bwr H}(x,y)^\alpha\lesssim
\|F(x)-F(y)\|\lesssim d_{C_2\bwr H}(x,y).
$$
Our goal is to prove that $\alpha\le \frac{1}{p}$. For every
$\e=(\e_1,\ldots,\e_N)\in \{-1,1\}^N$ define $\psi_\e:H\to C_2$ by
$\psi_\e(x_i)=\frac{1+\e_i}{2}$, and $\psi_\e(x)=0$ if
$x\notin\{x_1,\ldots,x_N\}$. Let $f:\{-1,1\}^N\to C_2\bwr H$ be
given by $f(\e)=(f_\e,e_H)$. It is immediate to check that for all
$\e,\e'\in \{-1,1\}^N$ we have $\frac{k}{2}\|\e-\e'\|_1\le
\|f(\e)-f(\e')\|\le 4k\|\e-\e'\|_1.$ Metric spaces with Markov type
$p$ also have Enflo type $p$~\cite{NS02}, i.e. they
satisfy~\eqref{eq:enflo type}. Thus we can apply the Enflo type
inequality~\eqref{eq:enflo type} to the mapping $F\circ f:
\{-1,1\}^N\to X$ and deduce that $(Nk)^{\alpha p}\lesssim Nk^p$.
Consequently,
$   N^{\alpha p}  \lesssim N k^p \lesssim N (\log N)^p$.
Since the last inequality holds for arbitrarily large $N$, we infer that $\alpha p \le 1$.
 \fin }
\end{remark}

\section{Discussion and further questions}\label{sec:open}

In this section we discuss some natural questions that arise from
the results obtained in this paper. We start with the following
potential converse to~\eqref{eq:ANP}:

\begin{ques}\label{q:amen beta} Is it true that for every finitely
generated amenable group $G$,
\begin{eqnarray*}\label{eq:q beta}
\alpha^*(G)=\frac{1}{2\beta^*(G)}\quad ?
\end{eqnarray*}
\end{ques}
If true, Question~\ref{q:amen beta}, in combination with
Corollary~\ref{cor:iterated}, would imply a positive solution to the
following question:

\begin{ques}\label{q:wreath} Is it true that for every finitely
generated amenable group $G$,
$$
\alpha^*(G\bwr \Z)=\frac{2\alpha^*(G)}{2\alpha^*(G)+1}\quad ?
$$
\end{ques}
Additionally, since $\beta^*(G)\le 1$, a positive solution to
Question~\ref{q:amen beta} would imply a positive solution to the
following question:

\begin{ques}\label{q:1/2} Is it true that for every finitely
generated amenable group $G$,
$$
\alpha^*(G)\ge \frac12 \quad ?
$$
\end{ques}

Using~\eqref{eq:L1connection}, and arguing analogously to
Lemma~\ref{lem:lamp} while using the $L_1$ embedding of $C_2\bwr \Z$
in Section~\ref{sec:embed}, we have the following fact:
\begin{lem}\label{lem:L1} If a finitely generated group $G$ admits
a bi-Lipschitz embedding into $L_1$ then so does $G\bwr \Z$.
\end{lem}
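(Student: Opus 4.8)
The plan is to mimic the proof of Lemma~\ref{lem:lamp}, replacing the role played by the bi-Lipschitz embedding $\theta\colon C_2\bwr\Z\to L_2$ there with the bi-Lipschitz embedding of $C_2\bwr\Z$ into $L_1$ constructed in Section~\ref{sec:embed}, and using the metric description~\eqref{eq:L1connection} to handle the ``$G$-content'' of a configuration. Fix a bi-Lipschitz embedding $\theta\colon C_2\bwr\Z\to L_1$ and a bi-Lipschitz embedding $\psi\colon G\to L_1$; both exist by hypothesis (the former by Section~\ref{sec:embed}). By~\eqref{eq:L1connection} applied with $H=\Z$, for $(f,i),(g,j)\in G\bwr\Z$ we have
$$
d_{G\bwr\Z}\big((f,i),(g,j)\big)\approx d_{\mathscr{L}_G(\Z)}\big((f,i),(g,j)\big)+d_{\ell_1(\Z,G)}(f,g).
$$
So it suffices to produce a map into $L_1$ that captures each of the two summands on the right up to constants; the sum of two such maps (placed on orthogonal copies of $L_1$) does the job, since $L_1\oplus_1 L_1\cong L_1$.

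The second summand is the easy one: the map $F_2(f,i)\coloneqq \psi\circ f\in\ell_1(\Z,L_1)\cong L_1$ satisfies, by the bi-Lipschitz property of $\psi$,
$$
\|F_2(f,i)-F_2(g,j)\|_1=\sum_{k\in\Z}\|\psi(f(k))-\psi(g(k))\|_1\approx\sum_{k\in\Z}d_G\big(f(k),g(k)\big)=d_{\ell_1(\Z,G)}(f,g),
$$
where crucially we are in $L_1$ so no exponent loss occurs (this is exactly why the statement is clean for $L_1$, unlike the $L_2$ compression statements elsewhere in the paper). For the first summand I would reduce the $\mathscr{L}_G(\Z)$-metric to the $C_2\bwr\Z$-metric by a random-coloring trick as in Lemma~\ref{lem:lamp}: let $\{\e_z\}_{z\in G}$ be i.i.d. uniform $\{0,1\}$ Bernoullis on a probability space $(\Omega,\Pr)$, set $\e_f(k)\coloneqq\e_{f(k)}$, and define $F_1(f,i)\coloneqq\theta(\e_f,i)\in L_1(\Omega,L_1)\cong L_1$. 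Using the metric formula~\eqref{eq:the metric} for $\mathscr{L}_G(\Z)$ and the bi-Lipschitz property of $\theta$, the distance $\|F_1(f,i)-F_1(g,j)\|_{L_1(\Omega,L_1)}=\E[\|\theta(\e_f,i)-\theta(\e_g,j)\|_1]$ is, up to constants, $\E[\,|i-j|+\max\{|k|+1:\e_{f(k)}\ne\e_{g(k)}\}\,]$ (here I use that $\theta$ is into $L_1$ so taking expectations interacts linearly with the metric). Picking $k_{\max}$ with $f(k_{\max})\ne g(k_{\max})$ realizing $\max\{|k|:f(k)\ne g(k)\}$, the event $\e_{f(k_{\max})}\ne\e_{g(k_{\max})}$ has probability $\tfrac12$, which gives the lower bound $\gtrsim|i-j|+|k_{\max}|+1\approx d_{\mathscr{L}_G(\Z)}((f,i),(g,j))$; the upper bound is immediate since the random maximum never exceeds $|k_{\max}|$.

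Finally I would assemble $F\coloneqq F_1\oplus F_2\colon G\bwr\Z\to L_1(\Omega,L_1)\oplus_1 L_1\cong L_1$ and combine the two estimates with~\eqref{eq:L1connection} to conclude $\|F(f,i)-F(g,j)\|_1\approx d_{G\bwr\Z}((f,i),(g,j))$, i.e. $F$ is a bi-Lipschitz embedding of $G\bwr\Z$ into $L_1$. The only mild subtlety — the ``main obstacle'', such as it is — is making sure that the $L_1$ norm genuinely commutes with the Bernoulli expectation in the way I used it for $F_1$: this is fine because $L_1(\Omega,L_1)$ is itself an $L_1$ space and $\theta$ is a map into $L_1$, so the bi-Lipschitz inequalities for $\theta$ integrate term-by-term over $\Omega$; no convexity or Jensen step is needed, in contrast to the Hilbertian arguments. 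One should also note (as in Remark~\ref{rem:growth}) that the constants in the embedding of $C_2\bwr\Z$ from Section~\ref{sec:embed} do not depend on any auxiliary cardinality, so nothing here depends on $G$ beyond its admitting a bi-Lipschitz embedding into $L_1$.
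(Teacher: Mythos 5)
Your proposal is correct and is exactly the argument the paper has in mind: the paper's one-sentence justification is ``Using~\eqref{eq:L1connection}, and arguing analogously to Lemma~\ref{lem:lamp} while using the $L_1$ embedding of $C_2\bwr \Z$ in Section~\ref{sec:embed},'' and you have filled in precisely those three ingredients --- the metric decomposition, the Bernoulli-coloring reduction to $C_2\bwr\Z$, and the $\ell_1(\Z,L_1)$ component for the $d_{\ell_1(\Z,G)}$ term --- together with the correct observation that in $L_1$ the expectation over $\Omega$ commutes with the norm so no Jensen step is needed.
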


\begin{ques}\label{q:L1}
Is it true that for every finitely generated amenable group $G$ we
have $\alpha^*_1(G)=1$?
\end{ques}

Since the metric space $\left(L_1,\sqrt{\|x-y\|_1}\right)$ embeds
isometrically into $L_2$ (see~\cite{WW75}), a positive solution to
Question~\ref{q:L1} would imply a positive solution to
Question~\ref{q:1/2}.

Our repertoire of groups $G$ for which we know the exact value of
$\alpha^*(G)$ is currently very limited. In particular, we do not
know the answer to the following question:

\begin{ques}\label{q:2/3}
Does there exist a finitely generated amenable group $G$ for which
$\alpha^*(G)$ is irrational? Does there exist a finitely generated
amenable group $G$ for which $\frac23<\alpha^*(G)<1$?
\end{ques}

In~\cite{Yu05} Yu proved that for every finitely generated
hyperbolic group $G$ there exists a large $p>2$ for which
$\alpha_p^\#(G) \ge \frac1{p}$. In view of
Theorem~\ref{thm:equialpha} it is natural to ask:

\begin{ques}\label{q:hyperbolic} Is it true that for every finitely
generated hyperbolic group $G$ there exists some $p\ge 1$ for which
$\alpha_p^\#(G)\ge \frac12$?
\end{ques}

We do not know the value of $\alpha_p^*(\Z\bwr \Z)$ for $1<p<2$. The
following lemma contains some bounds for this number:

\begin{lem}\label{lem:LpZwrZ} For every $1<p<2$,
\begin{eqnarray}\label{eq:LpZwrZ}
\frac{p}{2p-1}\le \alpha_p^*(\Z\bwr \Z)\le
\min\left\{\frac{p+1}{2p},\frac{4}{3p}\right\}.
\end{eqnarray}
\end{lem}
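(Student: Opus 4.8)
The plan is to prove the three inequalities in~\eqref{eq:LpZwrZ} separately, each time feeding into a result already established above. For the \emph{lower bound} $\alpha_p^*(\Z\bwr\Z)\ge\frac{p}{2p-1}$ I would apply the first case of Theorem~\ref{thm:Lp} with $G=H=\Z$. Since $L_2$ embeds isometrically into $L_p$ (realize $\ell_2$ inside $L_p$ by a Gaussian Hilbert space, exactly as in the proof of Lemma~\ref{lem:p>2}), one has $\alpha_p^*(K)\ge\alpha_2^*(K)$ for every finitely generated group $K$; in particular $\alpha_p^*(\Z)=1$ (trivially $\Z\hookrightarrow\R$) and, by Lemma~\ref{lem:lamp}, $\alpha_p^*\big(\mathscr L_\Z(\Z)\big)\ge\alpha_2^*\big(\mathscr L_\Z(\Z)\big)=1$, hence $\alpha_p^*\big(\mathscr L_\Z(\Z)\big)=1$. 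Because $p\ge1$ we have $\min\{1,1\}=1\ge\frac1p$, so Theorem~\ref{thm:Lp} yields
$$\alpha_p^*(\Z\bwr\Z)\ \ge\ \frac{p\cdot 1\cdot 1}{p\cdot 1+p\cdot 1-1}\ =\ \frac{p}{2p-1}.$$

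For the \emph{upper bound} $\alpha_p^*(\Z\bwr\Z)\le\frac{4}{3p}$ I would use~\eqref{eq:ANP}. For $1<p\le2$ the space $L_p$ has modulus of smoothness of power type $p$ (with $S_p(L_p)\le1$), and $\Z\bwr\Z$ is amenable, so~\eqref{eq:ANP} gives $\alpha_p^*(\Z\bwr\Z)\le\frac{1}{p\,\beta^*(\Z\bwr\Z)}$. Since $\Z$ has linear growth and $\beta^*(\Z)\ge\frac12$ (indeed $\E\big[d_\Z(W_t,e)\big]\approx\sqrt t$ for the simple random walk on $\Z$), Theorem~\ref{thm:ershler} gives $\beta^*(\Z\bwr\Z)\ge\frac{1+\beta^*(\Z)}{2}\ge\frac34$, whence $\alpha_p^*(\Z\bwr\Z)\le\frac{4}{3p}$.

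For the remaining bound $\alpha_p^*(\Z\bwr\Z)\le\frac{p+1}{2p}$ I would reuse the embedding of the discrete cube into $\Z\bwr\Z$ from Section~\ref{sec:edge}. Recall that for $1\le p\le2$ the space $L_p$ has Markov type $p$, hence Enflo type $p$ by~\cite{NS02}. Fix $\alpha<\alpha_p^*(\Z\bwr\Z)$ and a map $F:\Z\bwr\Z\to L_p$ with $d_{\Z\bwr\Z}(x,y)^\alpha\lesssim\|F(x)-F(y)\|_p\lesssim d_{\Z\bwr\Z}(x,y)$, and let $f:\{-1,1\}^n\to\Z\bwr\Z$ be as in~\eqref{eq:embed cube}. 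By~\eqref{eq:diagonals} we have $d_{\Z\bwr\Z}(f(\e),f(-\e))\approx n^2$, while by~\eqref{eq:edges} a single-coordinate flip changes $f$ by distance $\approx n$. Applying the Enflo type $p$ inequality~\eqref{eq:enflo type} to $F\circ f$ gives $n^{2\alpha p}\lesssim\sum_{j=1}^n n^p=n^{p+1}$, i.e. $2\alpha p\le p+1$; letting $\alpha\uparrow\alpha_p^*(\Z\bwr\Z)$ yields $\alpha_p^*(\Z\bwr\Z)\le\frac{p+1}{2p}$. Together with the previous two estimates this is~\eqref{eq:LpZwrZ}.

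The only points requiring genuine care — there is no serious obstacle here — are the two ``definition-matching'' facts: that $L_p$ for $1<p<2$ has modulus of smoothness of \emph{exactly} power type $p$ (so that~\eqref{eq:ANP} is invoked with the correct exponent) and has Enflo type $p$; the distance computations~\eqref{eq:diagonals}, \eqref{eq:edges} for the cube embedding are routine from the description of the word metric on $\Z\bwr\Z$. Everything else is a direct substitution into Theorem~\ref{thm:Lp}, Theorem~\ref{thm:ershler}, \eqref{eq:ANP}, and the Enflo type inequality.
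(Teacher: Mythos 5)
Your proof is correct and follows essentially the same three-part route as the paper: Theorem~\ref{thm:Lp} for the lower bound, the Markov-type bound~\eqref{eq:ANP} together with $\beta^*(\Z\bwr\Z)\ge\frac34$ for $\frac{4}{3p}$, and the cube embedding~\eqref{eq:embed cube} with Enflo type $p$ of $L_p$ for $\frac{p+1}{2p}$. The paper states the lower bound is "an immediate corollary of Theorem~\ref{thm:Lp}" without detail; you correctly fill in that gap by observing $\alpha_p^*(\Z)=\alpha_p^*(\mathscr L_\Z(\Z))=1$ via the Gaussian embedding of $L_2$ into $L_p$.
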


\begin{proof} The lower bound in~\eqref{eq:LpZwrZ} is an immediate
corollary of Theorem~\ref{thm:Lp}. Since $\beta^*(\Z\bwr \Z)\ge
\frac34$, the upper bound $\alpha_p^*(\Z\bwr \Z)\le\frac{4}{3p}$
follows immediately from the results of~\cite{ANP07} (or
alternatively Theorem~\ref{thm:equialpha}), using the fact that
$L_p$, $1<p<2$, has Markov type $p$. The remaining upper bound is an
application of the fact that $L_p$, $1<p<2$, has Enflo type $p$,
which is similar to an argument in~\cite{AGS06}. Indeed, fix a
mapping $F:\Z\bwr \Z\to L_p$ such that
$$
x,y\in \Z\bwr \Z\implies d_{\Z\bwr \Z} (x,y)^\alpha\lesssim
\|F(x)-F(y)\|_p\lesssim d_{\Z\bwr \Z} (x,y).
$$
Let $f:\{-1,1\}^n\to \Z\bwr \Z$ be as in~\eqref{eq:embed cube}.
Plugging the bounds in~\eqref{eq:diagonals} and~\eqref{eq:edges}
into the Enflo type $p$ inequality~\eqref{eq:enflo type} for the
mapping $F\circ f:\{-1,1\}^n \to L_p$, we see that for all $n\in \N$
we have $n^{2p\alpha}\lesssim n^{p+1}$, implying that $\alpha \le
\frac{p+1}{2p}$.
\end{proof}

\begin{ques}\label{q:ZbwrZ} Evaluate $\alpha_p^*(\Z\bwr \Z)$ for
$1<p<2$.
\end{ques}

We end with the following question which arises naturally from the
discussion in Section~\ref{sec:edge}:

\begin{ques}\label{q:edge} Does there exist a finitely generated
group $G$ which has edge Markov type $2$ but does not have Enflo
type $p$ for any $p>1$?
\end{ques}
We do not even know whether there exists a finitely generated group
$G$ which has edge Markov type $2$ but does not have Markov type
$2$. Note that the results of Section~\ref{sec:edge} imply that if
$1<p<\frac43$ then the metric space $\left(\Z\bwr \Z,d_{\Z\bwr
\Z}^{p/2}\right)$ has bounded increment Markov type $2$, but does
not have Enflo type $q$ for any $q>\frac{2}{p}$. However, this
metric is not a graph metric.

\section*{Acknowledgements} We are grateful to Laurent Saloff-Coste
for helpful comments.



\bibliographystyle{abbrv}
\bibliography{wreath-general}

\end{document}